\documentclass{amsart}
\usepackage{hyperref}
\usepackage{amsmath,amssymb}
\usepackage{graphicx}
\usepackage{xcolor}
\usepackage{bm}
\usepackage{txfonts}
\usepackage{url}
\usepackage{breakurl} 
\usepackage{breqn}
\usepackage{anyfontsize}


\newtheorem{Thm}{Theorem}
\newtheorem{Lem}{Lemma}
\newtheorem{Prop}{Proposition}
\newtheorem{Cor}{Corollary}
\newtheorem{Rem}{Remark}
\newtheorem{Def}{Definition}
\newtheorem{Con}{Conjecture}
\newtheorem{Ex}{Example}
\newtheorem*{Property}{Property}


\def\inv{^{-1}}

\allowdisplaybreaks

\begin{document}

\title{Comments on Exchange Graphs in Cluster Algebras}

\author{Hyun Kyu Kim}
\address{(HK) Department of Mathematics, Ewha Womans University, 52 Ewhayeodae-gil, Seodaemun-gu, Seoul 03760, Republic of Korea}
\email{hyunkyukim@ewha.ac.kr}

\author{Masahito Yamazaki}
\address{(MY) Kavli IPMU (WPI), University of Tokyo, Chiba 277-8583, Japan; and Center for Fundamental Laws of Nature, Harvard Unversity, Cambridge MA 02138, USA}
\email{masahito.yamazaki@ipmu.jp}

\subjclass[2010]{13F60}

\keywords{cluster algebra, exchange graph, dilogarithm identity}

\date{\today}

\begin{abstract}
An important problem in the theory of cluster algebras is to compute the fundamental group of the exchange graph. A non-trivial closed loop in the exchange graph, for example, generates a non-trivial identity for the classical and quantum dilogarithm functions. An interesting conjecture, partly motivated by dilogarithm functions, is that this fundamental group is generated by closed loops of mutations involving only two of the cluster variables. We present examples and counterexamples for this naive conjecture, and then formulate a better version of the conjecture for acyclic seeds. 
\end{abstract}

\maketitle
\tableofcontents

\section{Introduction and Summary}\label{sec.intro}

Cluster algebra is a mathematical framework introduced by 
Fomin and Zelevinsky \cite{FZ1,FZ2,FZ4}.
Cluster algebra is defined from a skew-symmetrizable $n\times n$ integer matrix $B$
and an $n$-tuple of coefficients in a fixed semi-field.
We start with an initial seed, and we generate other seeds by repeating 
a combinatorial procedure known as a mutation.

An \emph{exchange graph} $\Gamma$ for a cluster algebra is a graph whose vertices are labeled by seeds, and whose edges by mutations (an edge connects two vertices 
if the corresponding two seeds are related by the mutation associated to the edge).
The exchange graph is  by definition connected.

It is an important problem of theory of the cluster algebras 
to identify the fundamental group of the exchange graph.
For example, we can associate a quantum dilogarithm identity \cite{ReinekePoisson,KellerOn,KashaevN} as well as a classical dilogarithm identity \cite{Nakanishi_Periodicities}, to any non-trivial closed loop of the exchange graph. 
The fundamental group can also be thought of as defining relations for the so-called cluster groupoid.

In this short note, we study the fundamental group of the exchange graph.
We first formulate Property ($\star$) in section \ref{sec.property}.
This property holds for finite-type seeds (section \ref{sec.finite}),
whereas there are counterexamples for non-finite-type cases (section \ref{sec.non-finite}).
In section \ref{sec.conjecture} we formulate our conjecture for acyclic cases 
(Conjecture \ref{conj1}). We will also discuss implications to quantum dilogarithm functions in section \ref{sec.dilog}.

While many of the results presented here are strictly speaking not new, 
we here tried to provide a coherent presentation incorporating the known results/examples/counterexamples scattered in the literature, and propose a concrete well-defined conjecture.
We hope that our small contribution will facilitate further developments in this exciting subject.

We would like thank Kyungyong Lee for discussion, and the referee(s) for reviewing and suggestions.
M.Y.\ was supported in part by WPI Research Center
Initiative (MEXT, Japan), by JSPS Program for Advancing Strategic
International Networks to Accelerate the Circulation of Talented Researchers,
by JSPS KAKENHI Grant Number 15K17634, and by JSPS-NRF Joint Research Project.

For H.K.: This work was supported by the Ewha Womans University Research Grant of 2017. This research was supported by Basic Science Research Program through the National Research Foundation of Korea(NRF) funded by the Ministry of Education(grant number 2017R1D1A1B03030230).


\section{Property \texorpdfstring{($\star$)}{(*)}}\label{sec.property}

Let us begin by asking if the following property holds:
\begin{Property}[$\star$]
The fundamental group for the exchange graph is generated by 
elements of the form $\mathcal{P} \mathcal{L} \mathcal{P}^{-1}$, where $\mathcal{L}$ is a closed loop (with a base point) obtained by 
mutating $2$ of the $n$ cluster variables while keeping the remaining $n-2$ variables fixed,
$\mathcal{P}$ is an arbitrary path originating at the base point, and $\mathcal{P}^{-1}$ is its inverse.
\end{Property}

\begin{Rem}\label{Rem.finite}
The exchange graph is in general an infinite graph. In the definition of the fundamental group,
we allow only finite closed loops.
\end{Rem}

Let us explain the motivation for this property, originating from the classical dilogarithm identity.

As already commented before, for a closed loop of the exchange graph we can associate an identity for the classical (Rogers) dilogarithm function $L(x)$.\footnote{
This is defined by
\begin{align}
\begin{split}
L(x):=
-\frac{1}{2}\int_0^x \! dt\, 
\left(
 \frac{\log(1-t)}{t} 
+ \frac{\log t}{1-t}
 \right) \ .
\end{split} 
\label{LRogers}
\end{align}
}
In the notations of \cite{KashaevN,Ip:2014pva},
this identity takes the form
\begin{align}
\sum_{t=1}^M \epsilon_t \,  L\left( \frac{y_{k_t}(t)^{\epsilon_t} }{1+y_{k_t}(t)^{\epsilon_t}} \right) = 0  \;.
\label{L_sum}
\end{align}
Here we consider $M$ mutations at $(k_1, \ldots, k_M)$, with the (classical) $y$-variables 
after $t$ mutations denoted by $y_i(t)$ (with $i=1, \ldots, n$), while $\epsilon_j=\pm 1$ are the so-called tropical signs.
For our purposes it is important that the variables $y_{k_t}(t)$, and hence the arguments $(y_{k_t}(t)^{\epsilon_t})/(1+y_{k_t}(t)^{\epsilon_t})$ of the classical dilogarithm function, 
are rational functions of the $n$ cluster variables $\{ y_i(0) \}_{i=1}^n$ in the initial seed.

Now, one characteristic identity satisfied by the classical dilogarithm function $L(x)$ is the celebrated five-term identity
\begin{align}
L(x)+L(y)=L\left(
\frac{x(1-y)}{1-xy}
\right)
+
L\left(
xy
\right)
+
L\left(
\frac{y(1-x)}{1-xy}
\right) \;,
\label{Lpentagon}
\end{align}
which is known to be the classical dilogarithm identity \eqref{L_sum}
for the $A_2$ matrix involving five mutations.

If all the functional identities (including the classical dilogarithm identity of \eqref{L_sum}) follow from repeated use of the pentagon \eqref{Lpentagon}, that gives a strong indication (albeit not a proof) that Property ($\star$) holds.\footnote{We can formulate this problem more intrinsically at the level of the Bloch group.
Suppose that we have a set of rational functions $x_i(\vec{t})$ with respect to variables $\vec{t}$ satisfying $\sum_{i} c_i x_i(\vec{t}) \wedge (1-x_i(\vec{t})) =0$. Then $\sum_i c_i [x_i(\vec{t})]$ defines an element of the Bloch group, and the question is if this element is trivial in the Bloch group.}
In this connection, it is worth mentioning the result of Wojtkowiak:
\begin{Thm}[\cite{Wojtkowiak}]
\label{thm.Woj}
Any functional equation of the dilogarithm with rational functions of one variable as arguments is a consequence of the five-term relation (up to a constant).
\end{Thm}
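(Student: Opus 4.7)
My approach would go through the pre-Bloch group, following the strategy pioneered by Bloch and Suslin. Define the pre-Bloch group $P(F) := \mathbb{Z}[F\setminus\{0,1\}]/\langle \text{five-term relations}\rangle$ of a field $F$. The hypothesis $\sum_i n_i L(F_i(t)) = \text{const}$ says exactly that the class of $\alpha := \sum_i n_i[F_i(t)] \in P(\mathbb{C}(t))$ is \emph{analytically} trivial. My plan is to upgrade this to saying $\alpha$ is \emph{algebraically} trivial modulo $P(\mathbb{C})$, which is precisely what the conclusion "is a consequence of the five-term relation, up to a constant" asserts.

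The main tool I would invoke is a localization-type exact sequence of the form
\[
0 \longrightarrow P(\mathbb{C}) \longrightarrow P(\mathbb{C}(t)) \xrightarrow{\;\partial\;} \bigoplus_{a \in \mathbb{C}\cup\{\infty\}} \mathbb{C}^\times,
\]
where at each $a$ the boundary $\partial_a$ is a tame symbol assembled from the orders of vanishing of $F_i$ and $1-F_i$ at $a$. This is the Bloch-group analogue of the Matsumoto--Suslin localization sequence in Milnor $K$-theory, transplanted to the dilogarithm setting. Granting this sequence, the argument reduces to two sub-problems: (i) show that the analytic assumption forces $\partial\alpha=0$, and (ii) interpret elements of the kernel $P(\mathbb{C})$ as literal identities among constants.

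For step (i) I would analyze the monodromy of $L$ around each singular point $a$ of the $F_i$. Analytic continuation of $L(z)$ around $z=0,1,\infty$ produces corrections that are $2\pi i$ times a logarithm, and summing the local monodromy contributions in $\sum_i n_i L(F_i(t))$ around $t=a$ yields an expression that vanishes identically only when a particular combination of the local symbols $(F_i, 1-F_i)_a$ is trivial in $\mathbb{C}^\times$. This combination is, up to sign conventions, exactly $\partial_a\alpha$. Hence analytic triviality of the sum forces each $\partial_a\alpha=0$. By exactness, $\alpha$ then descends to $P(\mathbb{C})$, meaning we have rewritten the original sum, using only applications of the five-term relation, as a $\mathbb{Z}$-linear combination of $L$ evaluated at genuine constants; this constant combination is the one named in the theorem.

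The real obstacle is step (i): one must argue that a purely \emph{analytic} hypothesis (the identity holding for $t$ in some open set) pins down the \emph{integral} tame-symbol class, not merely its reduction modulo torsion or modulo ambiguities coming from branch choices. This forces careful bookkeeping of the multivaluedness of $L$ (or passing to a single-valued variant such as Bloch--Wigner), and is where the one-variable hypothesis is used in earnest: on $\mathbb{P}^1$ the residues sum to zero (a global reciprocity that makes the localization sequence exact on the nose), whereas for rational functions of several variables one must instead quotient by higher-order relations, and the analogous statement fails. A secondary difficulty is establishing the localization sequence above with $P(\mathbb{C})$ (rather than the Bloch group $B(\mathbb{C})$) in the left slot, which requires a small Gersten-style resolution of the pre-Bloch group of $\mathbb{C}(t)$.
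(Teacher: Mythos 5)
First, a remark on scope: the paper does not prove this statement at all --- it is quoted verbatim from the reference [Wojtkowiak] as known background, and the footnote immediately following it even warns that the result fails for several variables. So there is no in-paper argument to compare against, and your proposal has to stand on its own. It does not, for one concrete reason: the localization sequence you propose as your ``main tool'' is false as stated, and the boundary map you describe is identically zero. If $\partial_a$ is the tame symbol at $a$ built from the orders of vanishing of $F_i$ and $1-F_i$, then $\partial_a[F_i]=\partial_a\{F_i,\,1-F_i\}$, and $\{f,1-f\}$ is a Steinberg element, hence trivial in $K_2(\mathbb{C}(t))$; the tame symbol kills it for every $f$ and every $a$. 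Consequently $\ker\partial=P(\mathbb{C}(t))$, and exactness of your sequence would assert $P(\mathbb{C}(t))=P(\mathbb{C})$, which is absurd (e.g.\ $[t]$ has $\delta([t])=t\wedge(1-t)\notin\Lambda^2\mathbb{C}^\times$, so it cannot lie in $P(\mathbb{C})$ modulo five-term relations). Your step (i) --- deducing $\partial\alpha=0$ from monodromy --- therefore proves something that is true for trivial reasons and carries no information, while the entire content of the theorem has been smuggled into the unproved exactness at the middle term. The tame symbol lives on $\Lambda^2\mathbb{C}(t)^\times$ (equivalently on $K_2$), not on the pre-Bloch group, and it is the \emph{Milnor} localization sequence $0\to K_2(\mathbb{C})\to K_2(\mathbb{C}(t))\to\bigoplus_a\mathbb{C}^\times\to 0$ that has the shape you wrote down, not a pre-Bloch-group analogue.

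The correct skeleton, which is close to what you were aiming at but routed through the Bloch group $B(F)=\ker\bigl(\delta\colon P(F)\to\Lambda^2F^\times\bigr)$ rather than $P(F)$, is: (i) differentiating the hypothesis $\sum_i n_iL(F_i(t))=\mathrm{const}$ shows that $\sum_i n_i\,F_i\wedge(1-F_i)=0$ in $\Lambda^2\mathbb{C}(t)^\times\otimes\mathbb{Q}$ --- this, not a tame-symbol computation, is where the analytic input enters, and it places $\alpha=\sum_i n_i[F_i]$ in $B(\mathbb{C}(t))\otimes\mathbb{Q}$; (ii) a Suslin-type rigidity statement, $B(\mathbb{C}(t))\otimes\mathbb{Q}\cong B(\mathbb{C})\otimes\mathbb{Q}$, proved via specialization homomorphisms $P(\mathbb{C}(t))\to P(\mathbb{C})$ and the fact that specialization is independent of the base point modulo five-term relations. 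Step (ii) is where the five-term relation actually does work and where the one-variable hypothesis is genuinely used; it is also the step your write-up replaces with an assumed exact sequence. Until you either prove that rigidity statement or correctly cite it (and fix the bookkeeping in (i) concerning torsion and branch ambiguities, which you rightly flag), the proposal is a plausible outline with its central lemma both misstated and unestablished.
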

One might be tempted to regard this theorem as a supporting evidence for Property $(\star)$.
Unfortunately, it is known that Theorem \ref{thm.Woj} does not generalize to the case of multiple variables as arguments ({\it cf.} \cite{Zagier_Frontiers}).\footnote{Five-term identity ``almost'' determines the function $L(x)$; a one-variable function satifying the pentagon \eqref{Lpentagon} as well as  the inversion relation
$L(x)+L(1-x)=\pi^2/6$ and differentiable three times or more coincides with $L(x)$ \cite[section 4]{RogersOld}).
This result in itself, however, does not guarantee that \eqref{L_sum} arises from repeated use of the five-term identity \eqref{Lpentagon}.
} Whether or not Property ($\star$) holds or not therefore should reflect this subtlety, to say the least.

\begin{Rem}
The reference \cite{Terashima:2013fg} associates a three-dimensional $\mathcal{N}=2$ supersymmetric gauge theory 
for a sequence of quiver mutations, for a skew-symmetric matrix $B$. We can then 
also associate a non-trivial duality (more precisely an equivalence of the $S^3$ partition function) between a pair of three-dimensional $\mathcal{N}=2$ supersymmetric gauge theories \cite{Terashima:2013fg,Gang:2015wya}.
Property ($\star$) in this context means that any such duality can be obtained starting with the 
three-dimensional $\mathcal{N}=2$ mirror symmetry between supersymmetric quantum electro-dynamics (SQED) and the XYZ model \cite{Aharony:1997bx}, together with gauging of global symmetries and adding superpotential interactions (with monopole operators in general) \cite{Dimofte:2011py}.
In other words, if Property ($\star$) does not hold then it is an indication that there is a new duality between three-dimensional $\mathcal{N}=2$ theories unknown in the literature. For this reason Property ($\star$) is also of great interest to
physicists.
\end{Rem}

\section{Finite Type Case}\label{sec.finite}

Let us discuss the case of finite type seeds, where the exchange graph is a finite graph.
Such a seed is classified by Fomin and Zelevinsky \cite{FZ2}. 

\begin{Thm}[\cite{FZ2}]
\label{thm:finite_type}
Property ($\star$)  holds for finite type seeds.
\end{Thm}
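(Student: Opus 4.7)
The plan is to exploit the geometric structure of the exchange graph in finite type, where it is known (by Fomin--Zelevinsky \cite{FZ2}) to be the $1$-skeleton of a simple convex polytope (the generalized associahedron, or dually the cluster polytope). Since a convex polytope of dimension $n \geq 2$ is homeomorphic to a closed ball and in particular simply connected, the fundamental group of its $1$-skeleton is generated, up to conjugation by paths from the base point, by the boundary loops of its $2$-dimensional faces. Thus the whole question reduces to identifying the $2$-faces of this polytope with the $2$-variable loops featured in Property ($\star$).

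Concretely, first I would recall the classification: a finite type seed has Cartan companion of type $A_n$, $B_n$, $C_n$, $D_n$, $E_{6,7,8}$, $F_4$, or $G_2$, and the associated exchange graph is finite and realized as the edge graph of the generalized associahedron. Second, I would invoke the standard fact from algebraic topology that if $X$ is a simply connected CW complex (here, the polytope itself) then $\pi_1$ of its $1$-skeleton $X^{(1)}$ is generated by the conjugates $\mathcal{P} \, \partial F \, \mathcal{P}^{-1}$, where $F$ ranges over the $2$-cells and $\mathcal{P}$ is a path from the base point to $\partial F$. Third, I would identify every $2$-face of the generalized associahedron as a rank-$2$ sub-exchange-graph: removing $n-2$ of the cluster variables from a seed and restricting to the mutation class of the remaining pair produces precisely the boundary of a $2$-face, which is a $4$-, $5$-, $6$-, or $8$-gon according as the restricted type is $A_1\times A_1$, $A_2$, $B_2$, or $G_2$. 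This matches the rank-$2$ cluster algebras classified in \cite{FZ2}. Each such boundary loop is exactly a loop $\mathcal{L}$ of the form allowed by Property ($\star$), where only $2$ of the $n$ cluster variables are being mutated and the remaining $n-2$ are held fixed.

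Combining these three steps yields the theorem: every closed loop in the finite-type exchange graph is homotopic (inside the ambient polytope) to a product of conjugates of $2$-face boundaries, and each such boundary is a loop of the form prescribed in Property ($\star$). The main obstacle, and the step I would spend the most care on, is the third one: one must argue carefully that the $2$-faces of the cluster polytope really correspond to the rank-$2$ sub-mutation loops in the sense of Property ($\star$), including checking that \emph{keeping $n-2$ cluster variables fixed} while mutating the other two genuinely traces out the face boundary in the exchange graph, and that no additional non-trivial relations among the cluster variables are introduced. For this I would rely on the compatibility of the cluster complex structure with freezing variables, which is standard in the Fomin--Zelevinsky framework. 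The first two steps are essentially formal consequences of the polytopal realization and basic algebraic topology, so no further obstacle is expected there.
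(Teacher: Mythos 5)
Your proposal is correct and follows essentially the same route as the paper: both arguments rest on the polytopal realization of the cluster complex from \cite{FZ2,CFZ} (the generalized associahedron), deduce from simple connectedness that the fundamental group of the $1$-skeleton is generated by conjugates of $2$-face boundaries (this is the content of Lemma 2.2 of \cite{FZ2}, phrased there via ``geodesic loops''), and then identify those $2$-faces with the rank-$2$ mutation loops obtained by freezing $n-2$ cluster variables (the quotient complex $\Delta_D$ in the paper). The only difference is presentational: you invoke the general CW-complex fact directly where the paper cites the corresponding lemma of Fomin--Zelevinsky.
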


\begin{proof}

The proof below is essentially contained in \cite{FZ2}.
However, the significance of their result in the current context of Property ($\star$) is not 
emphasized too explicitly in the paper, and hence we find it worth spending a few paragraphs on the proof.

The exchange graph is dual to the cluster complex $\Delta$, 
which is a $(n-1)$-dimensional simplicial complex whose ground set
is the set of all cluster variables and whose maximal simplices are the clusters. 
Namely, a $(d-1)$-dimensional simplex is given by a $d$-element subset of a single cluster.
In particular, the top-dimensional simplex, namely an $(n-1)$-dimensional simplex, corresponds to a cluster.

The cluster complex $\Delta$ coincides with another simplicial complex 
$\Delta(\Phi)$ defined from the root system $\Phi$ for the associated Dynkin type  
(\cite{FZ2}, in particular Theorem 1.13).
Now Theorem 1.4 of \cite{CFZ} (Theorem 3.2 of \cite{FZ2}) says that the (complete) simplicial fan consisting of the cones spanned by simplices of $\Delta(\Phi)$ is the normal fan of a simple $n$-dimensional convex polytope. Lemma 2.2 of \cite{FZ2} then implies 
that the fundamental group of the exchange graph is generated by 
elements of the form $\mathcal{P} \mathcal{L} \mathcal{P}^{-1}$, where $\mathcal{L}$ is a `geodesic loop',
$\mathcal{P}$ is an arbitrary path originating at the base point, and $\mathcal{P}^{-1}$ is its inverse.

It therefore remains to identify geodesic loops with mutations involving only two cluster variables.
In \cite{FZ2} one picks up a $(n-2)$-dimensional simplicial complex $D$, and 
define the simplicial complex $\Delta_D$ by a quotient.
Namely, this is a simplicial complex of the quotient cluster algebras,
such that $D'$ is a simplex of $\Delta_D$ if and only if $D\cup D'$ is a simplex in $\Delta$.
In the language of the cluster variables, this means that we fix $n-2$ cluster 
variables, and then consider the mutations on the remaining two variables.
The geodesic loop is identified to be a loop inside this $1$-dimensional complex $\Delta_D$. This completes the identification.

\end{proof}

\begin{Cor}\label{Thm_groupoid}
For finite type seeds, the loop $\mathcal{L}$ in property ($\star$)
corresponds to a sequence of mutations of rank-two skew-symmetrizable matrices of type either $A_1\times A_1, A_2, B_2$ or $G_2$, and of length $4,5,6,8$, respectively.
\end{Cor}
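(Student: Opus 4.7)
The plan is to reduce the statement to the classification of finite-type rank-two cluster algebras, leveraging what was already established in the proof of Theorem \ref{thm:finite_type}. That proof identifies each generating loop $\mathcal{L}$ of Property ($\star$) with a loop inside the one-dimensional complex $\Delta_D$ obtained by freezing $n-2$ cluster variables and mutating only the remaining two; equivalently, $\mathcal{L}$ is the exchange cycle of a rank-two cluster subalgebra.

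First, I would argue that this rank-two subalgebra is itself of finite type. Since by hypothesis the ambient exchange graph is finite, only finitely many distinct clusters can appear once $n-2$ variables are held fixed, so $\Delta_D$ is finite. Hence the restricted $2\times 2$ exchange matrix must lie in the Fomin--Zelevinsky classification of finite type \cite{FZ2}.

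Next, I would invoke the rank-two case of that classification. Writing the restricted matrix in the standard form $\left(\begin{smallmatrix} 0 & b \\ -c & 0 \end{smallmatrix}\right)$ with $b,c$ of the same sign (possibly zero), finite type is equivalent to $|bc|\in\{0,1,2,3\}$, yielding exactly the Cartan--Killing types $A_1\times A_1$, $A_2$, $B_2$, $G_2$. This exhausts the possibilities for the type of $\mathcal{L}$.

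Finally, for each of these four types I would read off the length of the minimal closed loop of alternating rank-two mutations. In $A_1\times A_1$ the two mutations commute and each is involutive, giving period $4$. For $A_2$, $B_2$, $G_2$, a short direct computation of the recursive cluster variables shows periods $5$, $6$, $8$ respectively; this is the classical rank-two periodicity already recorded in \cite{FZ1,FZ2}. I do not anticipate any genuine obstacle here, since rank-two periodicity is standard; the actual content of the corollary is the observation that the reduction carried out inside the proof of Theorem \ref{thm:finite_type} lands precisely in this classified rank-two setting.
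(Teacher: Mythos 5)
Your proposal is correct and follows essentially the same route as the paper: reduce to the rank-two finite-type classification of Fomin--Zelevinsky and then read off the periods $4,5,6,8$ by direct computation. The only (inessential) difference is that you deduce finite type of the rank-two restriction from the finiteness of the frozen subcomplex $\Delta_D$, whereas the paper invokes the bound $|b_{i,j}b_{j,i}|\in\{0,1,2,3\}$ on the ambient exchange matrix directly; both are immediate consequences of \cite{FZ2}.
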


\begin{proof}
For a finite type seed, we have $|b_{i,j} b_{j,i}|=0,1,2,3$ for any $i, j=1, \ldots, n$ \cite{FZ2}.
The $2\times 2$ matrix defined by $(b_{i,j})$ is then of type $A_1\times A_1, A_2, B_2$ or $G_2$, respectively. Since the matrix is of rank $2$, for each case it is then straightforward to repeat mutations to identify the closed loop in the exchange graph, and to see that it has length $4,5,6,8$.
\end{proof}

\section{Non-Finite-Type Cases}\label{sec.non-finite}

Let us next move onto non-finite-type cases. For this purpose, it is useful to quote the 
results for cases where the seed is generated from 
a signed adjacency matrix of an ideal triangulation of a Riemann surface with marked points, as described in \cite{FST}.

\begin{Thm}[\cite{FST},\cite{FT}]
Property ($\star$)
holds for seeds generated from a signed adjacency matrix of an ideal triangulation of a bordered surface with marked points, except when the surface is a closed surface with exactly two punctures.
\end{Thm}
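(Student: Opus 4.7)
The plan is to translate Property ($\star$) into a statement that the tagged arc complex of the surface is simply connected, and then invoke the work of Fomin--Shapiro--Thurston and Fomin--Thurston.

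First, I would set up the dictionary from \cite{FST}. For a bordered surface $\Sigma$ with marked points (not a closed surface with exactly two punctures), the seeds of the associated cluster algebra are in bijection with the tagged triangulations of $\Sigma$, and mutations correspond to flips of tagged arcs. Thus the exchange graph $\Gamma$ is the flip graph, and the cluster complex $\Delta$ is the tagged arc complex $\Delta^{\bowtie}(\Sigma)$, whose simplices are collections of pairwise compatible tagged arcs. In particular, $\Gamma$ is the $1$-skeleton of the dual CW complex of $\Delta^{\bowtie}(\Sigma)$, whose $2$-cells correspond bijectively to codimension-$2$ faces of $\Delta^{\bowtie}(\Sigma)$.

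Next I would interpret Property ($\star$) geometrically. A loop $\mathcal{L}$ of the type required fixes $n-2$ tagged arcs and varies only the remaining two; combinatorially it traces out the link of a codimension-$2$ face of $\Delta^{\bowtie}(\Sigma)$, which is a cycle of length $4$, $5$, $6$, or $8$ realizing one of the rank-$2$ flip relations (the square from commuting flips, the pentagon inside a pentagon, and the hexagon/octagon in appropriate annular or once-punctured settings). Under the cluster-complex/dual-complex duality, these loops are precisely the attaching maps of the $2$-cells in the dual CW complex. Hence the conjugates $\mathcal{P}\mathcal{L}\mathcal{P}\inv$ generate $\pi_1(\Gamma)$ if and only if filling in these $2$-cells yields a simply connected complex, i.e., if and only if $\Delta^{\bowtie}(\Sigma)$ itself is simply connected.

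The main step is therefore to establish simple connectedness of $\Delta^{\bowtie}(\Sigma)$. This is the content of \cite{FST,FT} for any bordered surface other than the closed surface with two punctures: one natural route is via the lambda-length coordinates of \cite{FT}, which identify (a decorated version of) the Teichm\"uller space of $\Sigma$ with a cell complex dual to $\Delta^{\bowtie}(\Sigma)$, so contractibility of decorated Teichm\"uller space yields simple connectedness of $\Delta^{\bowtie}(\Sigma)$. The excluded case is exceptional because the tagged arc complex there actually disconnects into components distinguished by the relative tags at the two punctures, so mutation alone cannot connect all tagged triangulations and extra generators are needed. I expect the main obstacle to be the careful handling of the dictionary in the small list of surfaces (once-punctured closed surfaces, small boundary cases) where the correspondence in \cite{FST} requires minor adjustments, together with the verification that every codimension-$2$ face of $\Delta^{\bowtie}(\Sigma)$ really does produce a $2$-arc loop of the form allowed in Property ($\star$), rather than some more exotic relation.
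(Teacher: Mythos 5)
The paper does not actually prove this theorem: it is stated purely as a citation to \cite{FST} and \cite{FT}, and the surrounding text only records the adjacent facts (Theorem 3.10 of \cite{FST} on the ideal-triangulation flip graph $\Gamma'$, and the caveat that $\Gamma'$ is merely a subgraph of the exchange graph because self-folded triangles block flips). So there is no in-paper argument to compare yours against; what can be assessed is whether your sketch correctly reconstructs what \cite{FST,FT} supply. In broad outline it does --- seeds correspond to tagged triangulations, the exchange graph is the tagged flip graph, and the generation statement for $\pi_1$ is extracted from the structure of the tagged arc complex --- and your flagged worry about the exceptional small surfaces is the right one.

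Two concrete points are off, however. First, surface exchange matrices are skew-symmetric with $b_{ij}b_{ji}=-b_{ij}^2$, so a two-variable loop closes up only when $|b_{ij}|\le 1$: the loops $\mathcal{L}$ have length $4$ or $5$ only. There are no hexagons or octagons (those belong to $B_2$ and $G_2$, which cannot arise here); when $|b_{ij}|=2$ the rank-two pattern is of affine type and the corresponding codimension-$2$ face of the tagged arc complex has an \emph{infinite} link. This is not just an over-inclusion on your part: it means the dual object is not a CW complex whose $2$-cells are all finite polygons, so ``$\pi_1(\Gamma)$ is generated by the polygon boundaries iff $\Delta^{\bowtie}(\Sigma)$ is simply connected'' does not follow from the bare duality dictionary and needs the more careful argument of \cite{FST,FT} (the analogue of Lemma 2.2 of \cite{FZ2} for non-locally-finite complexes). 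Second, your diagnosis of the excluded case is wrong: by \cite{FST} (Proposition 7.10 there) the tagged arc complex \emph{disconnects} precisely for closed surfaces with exactly \emph{one} puncture --- where Property ($\star$) nevertheless holds, working with ordinary triangulations in a single component --- whereas for a closed surface with exactly two punctures the exchange graph is connected and the failure is a genuine fundamental-group phenomenon: there exist closed loops traversing several tag-signature strata (the length-$32$ loop of Example \ref{ex.genus1} passes through eight of them) that are not products of conjugated squares and pentagons. Since your proposed equivalence would force the failure to show up as a defect of simple connectedness of the complex itself, the exceptional case is exactly where the reduction you describe as a ``minor adjustment'' breaks down, and it cannot be waved through.
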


Let us define a graph $\Gamma'$ to be a graph whose vertex is an ideal triangulation
and whose edge connecting two vertices is an operation called a flip, mapping one ideal triangulation to another. The following theorem guarantees that the counterpart of Property ($\star$) holds for this graph:

\begin{Thm}[\cite{FST}, Theorem 3.10\footnote{This theorem is known long before \cite{FST}, 
see e.g.\ \cite{Harer} and references in \cite{FST} for more detailed literature list.}]
The fundamental group of  $\Gamma'$ is  
generated by cycles of length $4$ corresponding to pairs of commuting flips,
and the cycles of length $5$ whose removal would create a pentagonal face.
\label{Thm.ideal}
\end{Thm}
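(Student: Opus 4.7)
The plan is to realize $\Gamma'$ as the $1$-skeleton of a $2$-complex $X$ obtained by attaching one $2$-cell along each of the two distinguished families of cycles in the statement, and then to prove that $X$ is simply connected. Once simple connectivity of $X$ is in hand, van Kampen's theorem identifies $\pi_1(\Gamma')$ with the normal closure of the boundaries of the attached $2$-cells, which is exactly the desired conclusion.

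First I would describe the $2$-cells of $X$ explicitly. A \emph{square} is glued along every length-$4$ cycle $T \to T_1 \to T' \to T_2 \to T$ in which $T_1$ and $T_2$ are obtained from $T$ by flipping two distinct arcs $\alpha_1, \alpha_2$ that do not lie in a common triangle of $T$; such flips commute, so the cycle closes. A \emph{pentagon} is glued along every length-$5$ cycle arising from an embedded pentagonal subregion of $T$, namely a disk with five boundary arcs and two interior diagonals: the two diagonals can be successively flipped through all five triangulations of the pentagon, returning to the original after five flips (the classical $A_2$ cluster relation), and by construction removing this $5$-cycle would create a pentagonal $2$-face, matching the description in the statement.

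Next I would identify $X$ with the $2$-skeleton of the cell complex dual to the arc complex $\mathcal{A}(S,M)$, whose simplices are the collections of pairwise non-crossing isotopy classes of arcs on $(S,M)$. In this identification, top-dimensional simplices correspond to ideal triangulations, codimension-one faces to flips, and codimension-two faces come in exactly the two flavors above, depending on whether the two arcs one removes are locally disjoint (producing a square link) or share a common pentagonal neighborhood (producing a pentagonal link). Thus $X$ is genuinely the $2$-skeleton of the dual complex, and the full set of its $2$-cells really is exhausted by the squares and pentagons above.

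The main obstacle is the final step: showing that $X$ is simply connected, which by the above duality reduces to a connectivity statement for $\mathcal{A}(S,M)$ going back to Harer. The standard route is an induction on the topological complexity of $(S,M)$, using a Morse-theoretic argument on Teichm\"uller space, or equivalently a direct simplicial-collapsing argument reducing any null-homotopic loop in $\Gamma'$ to a concatenation of square and pentagon boundaries. The exclusion of closed surfaces with exactly two punctures in the preceding theorem fits cleanly into this framework: there one must pass to tagged arcs to restore the required connectivity of the arc complex, which is precisely why that case is excluded from the present statement. Modulo this topological input, the rest of the argument is formal.
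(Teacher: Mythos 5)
The paper does not actually prove this statement: it is quoted from \cite{FST}, Theorem 3.10, and, as the footnote indicates, goes back to Harer and others, so there is no in-paper argument to compare yours against. Your outline is the standard route taken in those references --- realize $\Gamma'$ as the $1$-skeleton of the dual of the arc complex, attach $2$-cells along the square and pentagon cycles, and deduce the presentation of $\pi_1(\Gamma')$ from simple connectivity of the resulting $2$-complex --- so the strategy is the right one.

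As a proof, however, the proposal has two genuine gaps. First, essentially all of the content of the theorem is concentrated in the step you defer: simple connectivity of the square--pentagon complex $X$, equivalently the (high) connectivity of the arc complex, \emph{is} the theorem of \cite{Harer}, so ``modulo this topological input'' you have reduced the statement to itself rather than proved it; at minimum you would need to state precisely which connectivity result you are importing and verify that it applies to bordered surfaces with punctures. Second, the claim that the $2$-cells of the dual $2$-skeleton are ``exhausted by the squares and pentagons'' is too quick. Codimension-two faces of the arc complex of a punctured surface need not have square or pentagonal links: when one of the two removed arcs is enclosed by a self-folded triangle, or when the two arcs cut out a once-punctured digon, the set of completions to a triangulation connected by flips is an interval or some other graph rather than a $4$- or $5$-cycle, because some of the intermediate arcs are not flippable. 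One must check that these degenerate configurations contribute no additional generators to $\pi_1(\Gamma')$ --- in the untagged setting their dual links are contractible, which is why the theorem survives, but that case analysis is part of the proof, not a formality. This is exactly the subtlety, emphasized in the paragraph following the theorem, that forces the passage to tagged arcs for the companion statement about the full exchange graph and the exclusion of twice-punctured closed surfaces there.
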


There is one cautionary remark, however. As emphasized in \cite{FST}
the graph $\Gamma'$ is coming from ideal triangulations in general only a subgraph of the 
exchange graph $\Gamma$, and the fundamental group of the former is in general a subgroup of that of the latter. This is because we cannot flip at an edge surrounded by a self-folded triangle,
whereas in the cluster algebras we can mutate at any vertex of the quiver diagram.

For this reason, Theorem \ref{Thm.ideal} does not imply Property ($\star$).
In fact, we can find an example of a surface-type seed which does \emph{not}
satisfy Property ($\star$):


\begin{Prop}[\cite{FST}, Remark 9.19; \cite{FT}] \label{Thm.counter}
Property ($\star$) does not hold for a seed associated to an ideal triangulation of a closed surface 
(of genus $1$ or higher) with two punctures.
\end{Prop}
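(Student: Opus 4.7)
The plan is to exhibit an explicit closed loop $\mathcal{L}$ in the exchange graph $\Gamma$ that cannot be written as a product of conjugates of 2-variable loops, following the counterexamples indicated in Remark 9.19 of \cite{FST} and in \cite{FT}. The first step is to invoke the combinatorial model of \cite{FST}, which identifies $\Gamma$ (up to the mapping class group action) with a graph of tagged triangulations whose edges are tagged flips. The case of a closed surface $\Sigma$ of genus $\geq 1$ with exactly two punctures is special: as emphasized in \cite{FST}, the tagged-flip graph is not related to $\Gamma$ in the same way as in the generic case, and the discrepancy is controlled by a non-trivial action of the mapping class group $\mathrm{MCG}(\Sigma)$ that is absent in other cases.

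Concretely, I would fix a tagged triangulation $T_0$ and exhibit a sequence of tagged flips from $T_0$ back to a tagged triangulation $T_0'$ related to $T_0$ by a mapping class $\sigma \in \mathrm{MCG}(\Sigma)$ swapping the two punctures (such a $\sigma$ exists because $\Sigma$ has genus at least one and exactly two punctures, realized for instance by a half-twist supported in a neighborhood of a simple arc joining the two punctures). Since the cluster algebra depends on the triangulation only up to $\mathrm{MCG}(\Sigma)$, the endpoints $T_0$ and $T_0'$ represent the same seed, and the sequence descends to a closed loop $\mathcal{L}$ in $\Gamma$ based at the class of $T_0$.

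The main step, and the main obstacle, is to show that $[\mathcal{L}]$ does not lie in the subgroup of $\pi_1(\Gamma, T_0)$ generated by conjugates of 2-variable loops. For this I would construct an invariant $\phi \colon \pi_1(\Gamma, T_0) \to G$, with $G$ a suitable quotient of $\mathrm{MCG}(\Sigma)$, such that $\phi([\mathcal{L}]) = [\sigma] \neq 1$ while $\phi$ vanishes on every 2-variable loop. The vanishing should follow because a 2-variable loop leaves $n-2$ arcs of the triangulation fixed throughout; any mapping class it realizes must therefore preserve a large spanning subsurface, and hence be trivial (or at worst supported on a disc), neither of which can realize the non-trivial puncture-swap class on a genus $\geq 1$ surface with two punctures. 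Rigorously defining $\phi$ on arbitrary loops in $\Gamma$ and carefully controlling the image of conjugates of 2-variable loops is the delicate part; once this is in place, the non-triviality of $\phi([\mathcal{L}])$ immediately obstructs Property $(\star)$.
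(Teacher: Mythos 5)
Your first step contains a genuine gap that the rest of the argument inherits. A path of tagged flips from $T_0$ to $T_0'=\sigma(T_0)$ with $\sigma$ a non-trivial mapping class is \emph{not} a closed loop in the exchange graph $\Gamma$: the vertices of $\Gamma$ are seeds, and $T_0$ and $\sigma(T_0)$ carry genuinely different clusters (different tagged arcs, hence different cluster variables), so they are distinct vertices. The exchange graph is not taken modulo $\mathrm{MCG}(\Sigma)$; the mapping class group acts on $\Gamma$ by non-trivial automorphisms rather than being quotiented out. This is visible in the paper's own Example \ref{ex.genus2}: after $10g-2$ flips the triangulation returns to itself only ``modulo labels of edges and the punctures interchanged,'' and one must concatenate \emph{four} copies of that sequence to obtain an honest closed loop of length $40g-8$. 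Once the loop is closed in this way, it begins and ends at the very same tagged triangulation, so the mapping class it ``realizes'' is trivial; your proposed invariant $\phi$ with values in a quotient of $\mathrm{MCG}(\Sigma)$ would therefore vanish on every genuine element of $\pi_1(\Gamma,T_0)$ and cannot detect anything.

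The obstruction actually used (following \cite{FST}, Remark 9.19) is of a different nature: it is the stratification of $\Gamma$ by the \emph{signatures} of the tags at the two punctures. Each tagged triangulation of a closed surface with two punctures has a signature in $\{+1,0,-1\}^2$, and the eight non-central signatures are arranged in a cycle; the length-$32$ (resp.\ $40g-8$) loop of Examples \ref{ex.genus1}--\ref{ex.genus2} winds once around this cycle of eight strata, whereas any loop mutating only two cluster variables (length $4$ or $5$) stays within a contractible portion of the nerve of this covering. It is this winding, not an image in the mapping class group, that survives as a non-trivial class and obstructs Property ($\star$). Your intuition that ``something global about the two punctures'' is responsible is in the right spirit, but the correct global datum is the tag-signature stratification rather than $\mathrm{MCG}(\Sigma)$.
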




\begin{Ex}\label{ex.genus1}

Let us give an example for the genus $1$ surface with two punctures.
In this case, we can start with the ideal triangulation of Fig.~\ref{fig.g1quiver},
whose associated exchange matrix is given by
\begin{align}
B=
\left(
\begin{array}{cccccc}
0 & 0 & -1 & 1 & -1 & 1 \\ 
0 & 0 & 1 & -1 & 1 & -1 \\ 
1 & -1 & 0 & 1 & 0 & -1 \\ 
-1 & 1 & -1 & 0 & 1 & 0 \\ 
1 & -1 & 0 & -1 & 0 & 1 \\ 
-1 & 1 & 1 & 0 & -1 & 0 
\end{array}
\right) \;.
\label{B_genus1}
\end{align}
\begin{figure}[htbp]
\centering{\includegraphics[scale=0.65]{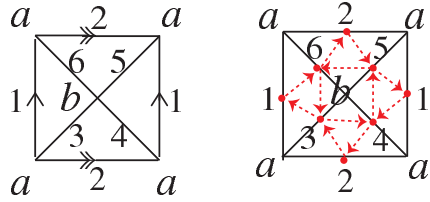}}
\caption{((Left) An ideal triangulation of the torus with two punctures.
The left and right (top and bottom) edges are to be identified.
The edges are labeled by $1, 2, \ldots, 6$, and punctures by $a, b$.
(Right) an associated quiver, whose signed adjacency matrix gives the exchange matrix $B$ in \eqref{B_genus1}.}
\label{fig.g1quiver}
\end{figure}

We found a mutation sequence corresponding to a closed loop in the exchange graph,
which is of length $32$:
\begin{align}
5, 6, 4, 3, 6, 5, 1, 2, 4, 3, 6, 5, 3, 4, 2, 1, 6, 5, 3, 4, 5, 6, 1,
2, 3, 4, 5, 6, 4, 3, 2, 1 \;,
\end{align}
For completeness we also have verified numerically that the cluster comes back to itself after this mutation sequence.

\end{Ex}


To better understand this mutation sequence geometrically, it is useful to use the concept of a
tagged triangulation. This concept is introduced in order to allow for a mutation at an edge encircled by a self-folded triangle. In practice, a tagged triangulation is defined to be a maximal collection of pairwise compatible tagged arcs, see \cite[section 7]{FST} for details. In particular, each of the two endpoints of a tagged arc has an extra label, either `plain' or `notched'. Graphically we can represent the `notched' labeling by the symbol $\Join$.

The length $32$ mutation sequence then can be shown as in Figure \ref{fig.g1_loop}.
Notice that the signatures of tags\footnote{We can associate a signature $+1, 0, -1$ to each puncture, and the set of such numbers define strata \cite[section 9]{FST}.} around the two punctures are different for all the eight tagged triangulations 
shown in the figure. This corresponds to the fact that we are going around eight different strata
in the language of  \cite{FST}.

\begin{figure}[htbp]
\centering{\includegraphics[scale=0.65]{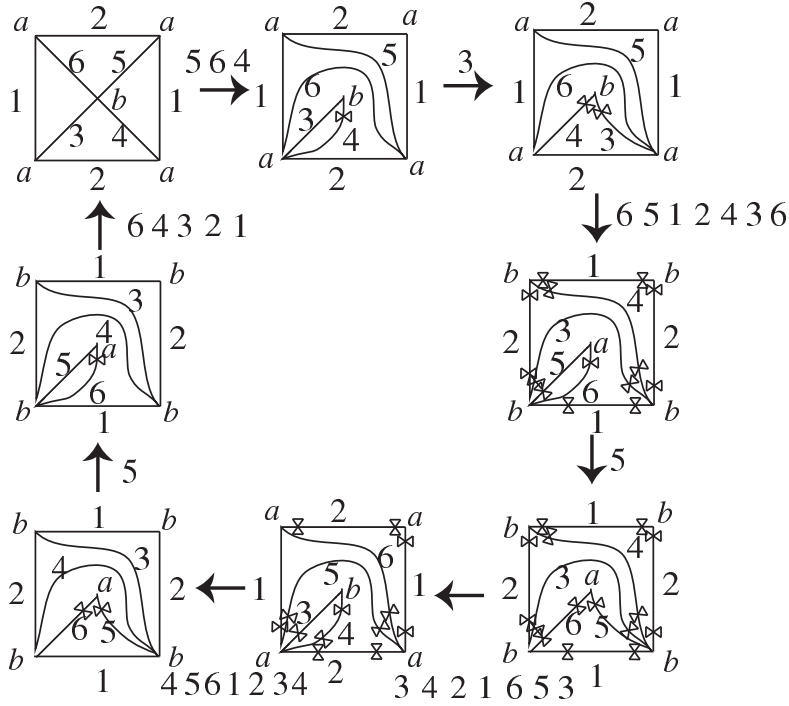}}
\caption{A closed loop in the exchange diagram, for a seed corresponding to a tagged triangulation of a genus $1$ surface with two punctures. The eight tagged triangulations shown here all have different signatures of tags around the puncture, and belong to different strata \cite[Remark 9.19]{FST}. 
}
\label{fig.g1_loop}
\end{figure}


\begin{Ex}\label{ex.genus2}

We can generalize Example \ref{ex.genus1} to a general $g>1$ surface with two punctures.

\begin{figure}[htbp]
\centering{\includegraphics[scale=0.6]{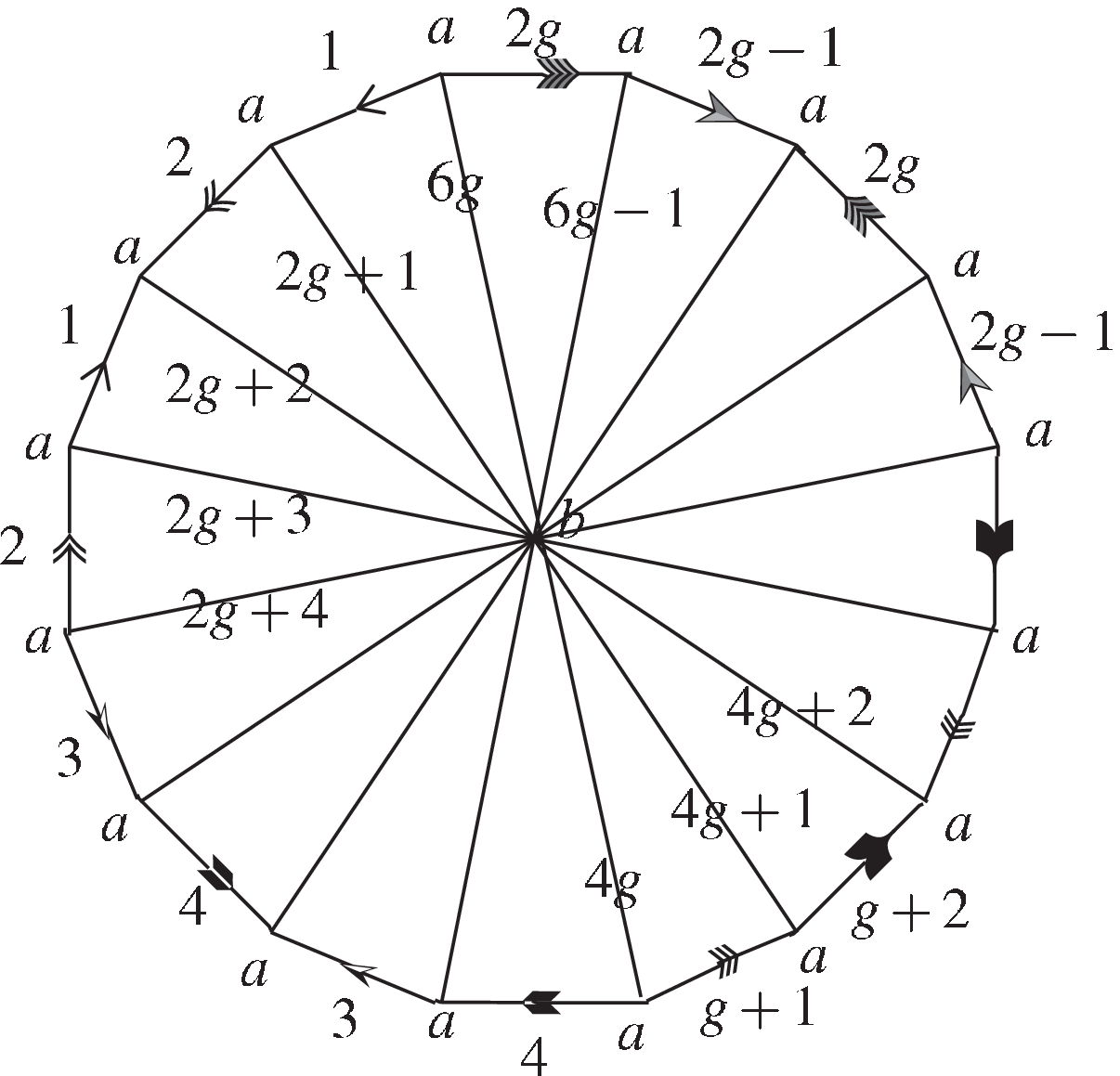}}
\caption{An ideal triangulation of a genus $g$ surface with two punctures, 
shown here is the case of $g=4$.
On the boundary of this $4g$-gon, edges with the same labels are to be identified and glued together.
We have $6g$ edges and two punctures $a, b$.}
\label{fig.g2quiver}
\end{figure}

Let us triangulate the surface as in Fig.~\ref{fig.g2quiver}.
We find the mutation sequence in this case is as follows. First, we consider flip at $4g-2$ edges, while still in the same stratum:
\begin{align}
2g+2, 2g+3, \ldots, 4g-1, 4g, \quad 4g+2, 4g+3, \ldots, 6g \;.
\end{align}
We then flip at edges $2g+1$ and then $4g+1$, each time changing tags at one of the punctures.
Afterwards we flip at edges
\begin{align}
6g, 6g-1, \ldots, 4g+3, 4g+2, \quad 4g, 4g-1,  \ldots, 2g+2,\quad  1,2, 3,4, \ldots, 2g \;.
\end{align}
After all these $10g-2$ flips, 
the triangulation comes back to itself modulo labels of edges and the punctures interchanged.
We can then repeat the similar mutation sequence three more times, and after $4(10g-2)=40g-8$ steps we are back to the initial tagged triangulation. 

\end{Ex}

\section{Conjecture}\label{sec.conjecture}

For those seeds not coming from ideal triangulations of Riemann surfaces with marked points,
there seems to be no general known results applicable in general.

Let us here state our conjecture:

\begin{Con}\label{conj1}
Property ($\star$)  holds for seeds of acyclic type.
\end{Con}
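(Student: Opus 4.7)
The plan is to extend the polytope-theoretic proof of Theorem \ref{thm:finite_type} to the acyclic setting by replacing the finite generalized associahedron with its infinite analogue, the \emph{Cambrian fan} of Reading--Speyer (equivalently, the $g$-vector fan of the acyclic cluster algebra). The central idea is that for an acyclic seed, the cluster complex still carries a simplicial fan structure whose maximal cones are in bijection with clusters, and whose codimension-$2$ cones correspond exactly to choices of $n-2$ cluster variables to hold fixed. If we can run the argument of \cite{FZ2} in this wider context, the generators of the fundamental group will again be supported on $2$-faces, i.e., on rank-$2$ mutation loops, yielding Property ($\star$).

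Concretely, I would proceed as follows. First, identify the exchange graph with the $1$-skeleton of the dual to the acyclic cluster complex, exactly as in the proof of Theorem \ref{thm:finite_type}. Second, invoke the Cambrian/$g$-vector fan structure on this complex (Reading--Speyer in finite type; scattering-diagram/$g$-vector constructions in general acyclic type) to realize the $1$-skeleton as the edge graph of a complete simplicial fan. Third, since every element of the fundamental group is represented by a \emph{finite} closed loop (Remark \ref{Rem.finite}), restrict attention to the finite subcomplex of clusters actually visited by the loop and its bounding $2$-cells. One then hopes to realize this finite subcomplex as (a piece of) the normal fan of a simple convex polytope, so that Lemma 2.2 of \cite{FZ2} applies and yields generators supported on $2$-faces. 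Each such $2$-face, by the description of codimension-$2$ cones above, corresponds to fixing $n-2$ cluster variables and varying the remaining two, which is precisely a rank-$2$ mutation loop of the form appearing in Property ($\star$).

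The hard part will be step three: justifying that the fundamental group of the infinite Cambrian fan is generated by its $2$-faces. For finite type this is Chapoton--Fomin--Zelevinsky plus Lemma 2.2 of \cite{FZ2}; for infinite acyclic type no such polytopal realization is known in this generality, and finite exhaustions may fail to be themselves normal fans of simple polytopes. A complementary and possibly more tractable route is categorical: the acyclic cluster category of Buan--Marsh--Reineke--Reiten--Todorov provides a model in which clusters are cluster-tilting objects, mutations are exchange triangles, and rank-$2$ loops correspond to the closure of exchange triangles inside a rank-$2$ (corank $n-2$) subcategory. Property ($\star$) would then follow if one could show that every syzygy among exchange triangles is generated by such rank-$2$ syzygies -- a braid-type statement for the cluster groupoid of an acyclic quiver, which for small rank can be checked directly and in general would likely require an inductive argument using source/sink mutations that stay within the acyclic belt.
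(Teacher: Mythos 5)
The statement you are trying to prove is stated in the paper as Conjecture \ref{conj1}: the authors do \emph{not} prove it, and offer only computational evidence (the length-$36$ loop for the $(A_3,A_3)$ quiver in Appendix \ref{app.A3A3} and a few similar checks). So there is no proof in the paper to compare yours against, and your text should be judged as a proposed proof of an open conjecture. As written, it is a research plan rather than a proof: your ``step three'' is precisely the open content of the conjecture, and you concede that you cannot carry it out. Replacing it by the categorical reformulation (``every syzygy among exchange triangles is generated by rank-$2$ syzygies'') does not help, since that sentence is just Property ($\star$) restated in the language of the cluster category; no reduction has taken place.

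Beyond the acknowledged gap, there is a concrete technical obstruction in your step two. For acyclic seeds of infinite type the $g$-vector/Cambrian fan is \emph{not} complete: it covers only a proper subset of $\mathbb{R}^n$ (already in rank $2$ affine type a limiting ray is missed, and in wild type an entire open cone is missed). Hence the fan cannot be the normal fan of a simple convex polytope, and the mechanism of Lemma 2.2 of \cite{FZ2} --- which is a statement about finite simple polytopes --- is unavailable. Your fallback of restricting to the finite subcomplex visited by a given loop does not repair this: that subcomplex need not be polytopal, and, more seriously, a loop can be null-homotopic in the full exchange graph only via $2$-cells lying outside any prescribed finite exhaustion, so generation of $\pi_1$ by $2$-faces cannot be checked one finite piece at a time. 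Any genuine proof would need a new idea here (for instance, an inductive argument on source/sink mutations, or a scattering-diagram analogue of the polytopality statement), which is exactly why the paper leaves this as a conjecture.
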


\begin{Rem}
Towards the completion of this paper it came to our attention that the same conjecture was made in 
\cite[Conjecture 8.15]{Warkentin}. Note \cite{FZ1} (above Example 7.8) contains a similar conjecture, however without the restriction to acyclic case (recall the counterexample of Proposition 
\ref{Thm.counter}).
\end{Rem}

An interesting example is provided by the so-called $(G, G')$ quiver, 
specified by a pair of Dynkin diagrams \cite{KellerPairs}.
This quiver has a known mutation sequence which sends the seed back to itself
(this is the statement of the periodicity of the $Y$-system \cite{Zamolodchikov:1991et,Kuniba:1992ev,Gliozzi:1995wq}).
We can try to see if this loop of the exchange graph arises from the pentagon.
For example, for the $(A_3, A_3)$ quiver the loop of the exchange diagram is of length $36$.
With the help of computer program we have verified that this follows from the repeated
use of closed loop corresponding to rank $2$ skew-symmetric matrices of type $A_2$ and $A_1\times A_1$
(see Appendix \ref{app.A3A3} for details).
We have repeated similar computations for some other values of $k$ and $n$, and 
verified that the conjecture holds.
It would be interesting to prove this result for general values of $k$ and $n$.

\section{Quantum Dilogarithm Identities}\label{sec.dilog}

As alluded to before, a loop in the exchange graph gives rise to a quantum dilogarithm identity.
Let us study the implications of these results at the level of the quantum dilogarithm function.

\begin{Def}[Quantum Dilogarithm Function \cite{Faddeev:1993rs,Faddeev:1993pe}]
We define the compact quantum dilogarithm function $\Psi_q(x)$ by\footnote{
In some literature $q^2$ is denoted by $q$.
In our notation we always have integer powers of $q$.
}
\begin{align}
\Psi_q(x):=\frac{1}{(-q x;q^2)_{\infty}} \;,
\quad
(x;q)_{\infty}: = \prod_{k=0}^{\infty} (1-q^k x) \;.
\label{Psi_def}
\end{align}
and the non-compact dilogarithm $\Phi^{\hbar}(z)$ by an integral expression
valid in the region  $|\textrm{Im}(z)| < \pi(1+\hbar)$:
\begin{align}
\begin{split}
\Phi^{\hbar}(z)
&  :=\exp \left( -\frac{1}{4} \int_{\Omega}
 \frac{e^{-i p z} }{\sinh(\pi p) \sinh (\pi \hbar p) } \frac{dp}{p}  \right)  \;,
 \end{split}
 \label{Phi_def}
\end{align} 
where $\hbar$ is a positive real number and the contour $\Omega$ is along the real axis, avoiding the pole at the origin from above along a small half-circle.

\end{Def}


The discussion of quantum dilogarithm identity is parallel between compact and non-compact dilogarithms \cite{KashaevN}, and we therefore mostly state only the compact cases, to avoid repetition.

\begin{Thm}[Quantum Dilogarithm Identity in Tropical Form \cite{ReinekePoisson,KellerOn}]\label{qdilog_tropical}

Consider a closed loop of the exchange graph, and let us denote the 
associated sequence of mutations by the labels $k_1, k_2, \ldots, k_M$.
Let $c_t$ ($t=1\ldots, M$) be the $c$-vector of the classical $y$-variables, and let us
denote their tropical signs by $\epsilon_t$ ($t=1\ldots, M$).
We then have
\begin{align}
\Psi_q(Y_{\epsilon_1 c_1})^{\epsilon_1} 
\cdots 
\Psi_q(Y_{\epsilon_M c_M} )^{\epsilon_M}=1 \;,
\label{eq.qdilog_tropical}
\end{align}
where the quantum $y$-variables $Y_{\alpha}$ with $\alpha=\sum_{i=1}^n \alpha_i e_i \in \mathbb{Z}^n$
satisfy the relation (with $Y_i:=Y_{e_i}$)
\begin{align}
Y_{\alpha+\beta}=q^{\langle \alpha, \beta \rangle} Y_{\alpha} Y_{\beta} \;,
\quad \langle \alpha, \beta \rangle=- \langle \beta, \alpha \rangle: =\alpha^T B \beta \;.
\end{align}

\end{Thm}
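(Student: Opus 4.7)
The plan is to prove the identity by exhibiting each quantum mutation as a product of two pieces: a conjugation by a single quantum dilogarithm factor, and a monomial (``tropical'') automorphism of the quantum torus, and then showing that along a closed loop the tropical pieces cancel, leaving only the dilogarithm product.

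First, I would set up the quantum torus $\mathcal{T}_q$ with generators $Y_\alpha$ satisfying $Y_{\alpha+\beta} = q^{\langle \alpha,\beta\rangle} Y_\alpha Y_\beta$, and recall the quantum $y$-mutation formula. The key technical input is the factorization
\begin{equation*}
\mu_k \;=\; \rho_{k,\epsilon} \,\circ\, \mathrm{Ad}\bigl( \Psi_q(Y_k^{\epsilon})^{\epsilon} \bigr),
\end{equation*}
where $\epsilon\in\{\pm 1\}$ is a chosen sign and $\rho_{k,\epsilon}$ is the monomial automorphism acting on generators by the combinatorial ``tropical'' part of the $B$-matrix mutation. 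That this factorization is well-defined for either choice of sign $\epsilon$, and that it indeed reproduces the quantum exchange relation, is a direct pentagon/five-term calculation in $\mathcal{T}_q$.

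Next I would iterate this factorization along the mutation sequence $k_1,\dots,k_M$. At step $t$, one wants to move all the monomial factors $\rho_{k_s,\epsilon_s}$ to the left, past the adjoint action of the dilogarithm. Conjugating $\Psi_q(Y_{k_t}^{\epsilon_t})^{\epsilon_t}$ by the accumulated tropical automorphism $\rho_{k_{t-1},\epsilon_{t-1}} \circ \cdots \circ \rho_{k_1,\epsilon_1}$ converts its argument into $Y_{\epsilon_t c_t}$: this is precisely the content of the definition/interpretation of the $c$-vectors as the images of the standard basis under the tropical part of the mutation sequence, and the tropical signs $\epsilon_t$ are chosen so that $\epsilon_t c_t$ lies in $\mathbb{Z}_{\ge 0}^n$. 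After this rearrangement, the whole composite becomes
\begin{equation*}
\mu_{k_M}\circ\cdots\circ\mu_{k_1} \;=\; \bigl(\rho_{k_M,\epsilon_M}\circ\cdots\circ\rho_{k_1,\epsilon_1}\bigr) \,\circ\, \mathrm{Ad}\bigl( \Psi_q(Y_{\epsilon_1 c_1})^{\epsilon_1}\cdots \Psi_q(Y_{\epsilon_M c_M})^{\epsilon_M} \bigr).
\end{equation*}

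Finally, I would use that the mutation sequence is a closed loop of the exchange graph, so the left-hand side is the identity automorphism of $\mathcal{T}_q$. Since the monomial/tropical part alone corresponds to the composite tropical $y$-seed mutation, which is also trivial along the closed loop (this is where sign-coherence of $c$-vectors is essential, and it is what makes the $\epsilon_t$-insertions consistent), the tropical factor is the identity. Therefore the adjoint action of the dilogarithm product is trivial, and since the quantum torus has trivial appropriate centralizer for a generic monomial, the product $\Psi_q(Y_{\epsilon_1 c_1})^{\epsilon_1}\cdots\Psi_q(Y_{\epsilon_M c_M})^{\epsilon_M}$ must itself equal $1$. The main obstacle I anticipate is the tropical bookkeeping in the second step: one needs to verify carefully that the conjugation-through calculus really produces $Y_{\epsilon_t c_t}$ with the correct signs, and that sign-coherence suffices to guarantee that the tropical automorphism collapses to the identity on the nose (rather than up to a sign or monomial factor). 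The non-compact statement follows by exactly the same algebraic manipulation, replacing $\Psi_q$ by $\Phi^\hbar$ and invoking the parallel pentagon identity.
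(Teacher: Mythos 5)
The paper does not actually prove this theorem: it is imported verbatim from the cited references \cite{ReinekePoisson,KellerOn}, so there is no internal proof to compare against. Your sketch is, in outline, exactly the standard argument from those references (and from Kashaev--Nakanishi): factor each quantum $y$-mutation as a monomial automorphism composed with $\mathrm{Ad}\bigl(\Psi_q(Y_k^{\epsilon})^{\epsilon}\bigr)$, push the monomial parts through to identify the arguments as $Y_{\epsilon_t c_t}$ via the $c$-vectors and sign-coherence, and use periodicity to kill both factors. So the strategy is the right one.

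Two points deserve more care than you give them. First, verifying the factorization $\mu_k = \rho_{k,\epsilon}\circ\mathrm{Ad}\bigl(\Psi_q(Y_k^{\epsilon})^{\epsilon}\bigr)$ and its independence of the sign $\epsilon$ is not a ``pentagon/five-term calculation''; it only uses the first-order difference equation $\Psi_q(q^2x)=(1+qx)^{-1}\Psi_q(x)$ of the quantum dilogarithm (the pentagon is a consequence of the theorem, not an input to this step). Second, and more substantively, your last step concludes from $\mathrm{Ad}(\text{product})=\mathrm{id}$ that the product equals $1$ because ``the quantum torus has trivial appropriate centralizer.'' Triviality of the adjoint action only shows the product is central in the completed quantum torus, and when $B$ is degenerate the center is nontrivial (it contains $Y_\alpha$ for $\alpha\in\ker B$), so a central element of the completed quantum affine space with constant term $1$ need not be $1$. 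The standard repair is to run the whole argument with principal coefficients (doubling the torus so that the skew form becomes nondegenerate) and then specialize; without this, your final step has a genuine gap for degenerate $B$.
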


These general quantum dilogarithm identities have
applications to a wide-ranging topics in mathematics and physics, e.g.\ references in \cite{KashaevN,Ip:2014pva}.


\begin{Prop}\label{Psi_rank2}
The dilogarithm identity for 
rank $2$ finite-type skew-symmetrizable matrix $B$ are given as follows:

For $A_1\times A_1$ with $Y_1Y_2 = Y_2 Y_1$,
\begin{align}
\Psi_q(1)\, \Psi_q(2) =\Psi_q(1)\, \Psi_q(1) \;.
\label{Psi_commute}
\end{align}

For $A_2$ with $Y_1Y_2 = q^2 Y_2 Y_1$:
\begin{align}
\Psi_q(1)\, \Psi_q(2) =\Psi_q(2)\, \Psi_q(12) \, \Psi_q(1) \;.
\end{align}

For $B_2$ with $Y_1Y_2 = q^4 Y_2 Y_1$:
\begin{align}
\Psi_q(1)\, \Psi_{q^2}(2) =\Psi_{q^2}(2)\, \Psi_q(12) \, \Psi_{q^2}(1^2 2) \, \Psi_q(1) \;.
\label{Psi_B2}
\end{align}

For $G_2$ with $Y_1Y_2 = q^6 Y_2 Y_1$:
\begin{align}
\Psi_{q^3}(1) \Psi_q(2) =
\Psi_{q}(2) \Psi_{q^3}(12^3) \Psi_q(1 2^2) \Psi_{q^2}(1^2 2^3) \Psi_q(12) \Psi_{q^3}(1) \;.
\label{Psi_G2}
\end{align}

\end{Prop}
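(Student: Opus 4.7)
The plan is to apply Theorem \ref{qdilog_tropical} directly to each of the four rank-$2$ finite-type cases. For an acyclic initial seed with exchange matrix $B = \begin{pmatrix} 0 & a \\ -b & 0 \end{pmatrix}$ where $ab \in \{0,1,2,3\}$ corresponds to $A_1 \times A_1$, $A_2$, $B_2$, $G_2$ respectively, Corollary \ref{Thm_groupoid} already identifies the closed loop as a sequence of $h+2 \in \{4, 5, 6, 8\}$ alternating mutations at vertices $1, 2, 1, 2, \ldots$, where $h$ is the Coxeter number of the associated type.

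First, I would compute the sequence of $c$-vectors $c_t$ and tropical signs $\epsilon_t$ along this loop by iterating the standard $c$-vector mutation rule starting from the identity $c$-matrix. By sign-coherence and the general theory of $c$-vectors in finite-type cluster algebras, the $c$-vectors with $\epsilon_t=+1$ enumerate the positive roots of the associated rank-$2$ root system exactly once, while the final two $c$-vectors (with $\epsilon_t=-1$) are $-e_1$ and $-e_2$. For $A_2$, for instance, one obtains
\begin{align*}
(c_1,\ldots,c_5) = (e_1,\, e_1+e_2,\, e_2,\, -e_1,\, -e_2), \qquad (\epsilon_1,\ldots,\epsilon_5) = (+,\,+,\,+,\,-,\,-),
\end{align*}
and analogous enumerations produce the four positive roots of $B_2$ (including $2e_1+e_2$) and the six positive roots of $G_2$ (including $e_1+2e_2$, $e_1+3e_2$, and $2e_1+3e_2$), each followed by the two negative simple roots.

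Second, I would substitute these $(c_t, \epsilon_t)$ into the product formula \eqref{eq.qdilog_tropical}. In the non-simply-laced cases the base of each $\Psi$ factor is determined by the skew-symmetrizer $D = \mathrm{diag}(d_1, d_2)$, namely $q^{d_\alpha}$ where $d_\alpha$ is the common value of $d_i$ over the support of $\alpha$ (equivalently, the squared length of the root). Bringing the two factors with $\epsilon_t = -1$ over to the left-hand side and inverting turns them into the factors $\Psi_{q^{d_1}}(Y_1)$ and $\Psi_{q^{d_2}}(Y_2)$ on the left, while the right-hand side becomes the ordered product of $\Psi$ factors indexed by positive roots as computed above. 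This yields the four identities \eqref{Psi_commute}--\eqref{Psi_G2}.

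The main obstacle is the bookkeeping in the non-simply-laced cases $B_2$ and $G_2$: one must carefully track how the asymmetry $|b_{12}| \neq |b_{21}|$ feeds into the iteration of the $c$-vector mutation rule, and assign the correct base $q^d$ to each $\Psi$ factor so that the resulting ordered product is consistent with the commutation relation $Y_{\alpha+\beta} = q^{\langle \alpha, \beta \rangle} Y_\alpha Y_\beta$ of Theorem \ref{qdilog_tropical}. Once these details are verified (essentially as carried out in \cite{KashaevN} and references therein), the four identities of the proposition follow by direct substitution.
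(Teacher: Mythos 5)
Your proposal is correct and takes essentially the same route as the paper, whose proof consists of the single remark that the identities follow ``by explicit computation, similar to the $D_4$ case in Appendix \ref{app.D4}'' --- i.e., computing the $c$-vectors and tropical signs along each length-$4$, $5$, $6$, $8$ loop and substituting them into the tropical-form identity \eqref{eq.qdilog_tropical}. Your extra framing via the enumeration of positive roots and the skew-symmetrizer fixing the bases $q^{d_i}$ just makes explicit the bookkeeping that the paper leaves implicit.
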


In the expressions above we used the shorthand notation 
$\Psi_q(1)=\Psi_q(Y_1)$, $\Psi_q(12)=\Psi_q(Y_{e_1+e_2})=\Psi_q(q\inv Y_1Y_2)$, $\Psi_q(1^2 2)=\Psi_q(Y_{2e_1+e_2})$, etc. For example,
the $A_2$ identity is the pentagon identity
\begin{align}
\Psi_q(Y_1)\, \Psi_q(Y_2) =\Psi_q(Y_2)\, \Psi_q(q\inv Y_1Y_2) \, \Psi_q(Y_1) \;.
\label{Psi_pentagon}
\end{align}


\begin{proof}
This is by explicit computation, similar to the $D_4$ case in Appendix \ref{app.D4}.
\end{proof}


\begin{Prop}\label{prop.tmp}
Let $B$ be a skew-symmetrizable matrix such that its seed is of finite type. 
Then the associated quantum dilogarithm identities are generated by
rank $2$ identities in Proposition
\ref{Psi_rank2}.
\end{Prop}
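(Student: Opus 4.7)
The plan is to combine the three key ingredients already established: Property $(\star)$ for finite type (Theorem \ref{thm:finite_type}), the identification of the 2-variable loops with rank-2 types $A_1\times A_1, A_2, B_2, G_2$ (Corollary \ref{Thm_groupoid}), and the bijection between exchange-graph loops and quantum dilogarithm identities (Theorem \ref{qdilog_tropical}). The bulk of the work is setting up the correspondence so that the \emph{group structure} of $\pi_1$ translates faithfully into a presentation of the identities.

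First, I would recast Theorem \ref{qdilog_tropical} as saying that the map sending a loop $\gamma$ based at a seed $s_0$ to the ordered product $\Psi_q(Y_{\epsilon_1 c_1})^{\epsilon_1}\cdots \Psi_q(Y_{\epsilon_M c_M})^{\epsilon_M}$ (viewed as a word in quantum dilogarithms of monomials in the initial quantum $y$-variables) descends to a homomorphism from $\pi_1(\Gamma,s_0)$ into the relations satisfied by formal products of $\Psi_q$'s. Under this homomorphism concatenation of loops corresponds to juxtaposition of words, so a set of generators of $\pi_1(\Gamma,s_0)$ maps to a set of generating relations among the $\Psi_q$'s.

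Second, I would invoke Theorem \ref{thm:finite_type}: in the finite-type case $\pi_1(\Gamma,s_0)$ is generated by elements $\mathcal{P}\mathcal{L}\mathcal{P}^{-1}$ with $\mathcal{L}$ a 2-variable loop. Thus every quantum dilogarithm identity at $s_0$ is a consequence of the identities attached to such conjugates. By Corollary \ref{Thm_groupoid}, the loop $\mathcal{L}$ itself comes from one of the four rank-2 types, and the identity associated to $\mathcal{L}$ (computed via Theorem \ref{qdilog_tropical} applied to the rank-2 sub-seed, with the appropriate skew-symmetrizer inherited from $B$) is precisely one of \eqref{Psi_commute}--\eqref{Psi_G2} of Proposition \ref{Psi_rank2}, with $Y_1,Y_2$ replaced by the two quantum $y$-variables of the 2-variable sub-cluster at the base of $\mathcal{L}$.

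Third, I need to handle the conjugation by $\mathcal{P}$. The point is that conjugating a loop in $\pi_1$ corresponds, under the homomorphism above, to \emph{transporting} the identity to a different seed: the monomial arguments inside each $\Psi_q$ in the rank-2 identity get replaced by the corresponding quantum $y$-variables of the seed reached by $\mathcal{P}$, which are themselves (by tracking mutations along $\mathcal{P}$) monomials in the initial quantum $y$-variables. Hence $\mathcal{P}\mathcal{L}\mathcal{P}^{-1}$ contributes exactly one of the four rank-2 identities of Proposition \ref{Psi_rank2}, evaluated on a specific pair of quantum monomials determined by $\mathcal{P}$. Summing over generators, every identity at $s_0$ is a consequence of such transported rank-2 identities, which is the claim.

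The main obstacle is the last point: making rigorous that conjugation in $\pi_1(\Gamma)$ does translate cleanly to variable substitution in the quantum dilogarithm identities, and that the identity attached to $\mathcal{L}$ really depends only on the rank-2 sub-matrix (not on the remaining rows and columns of $B$). This requires carefully checking that the tropical signs and $c$-vectors along the 2-variable sub-loop coincide with those of the rank-2 ambient seed, so that the product \eqref{eq.qdilog_tropical} reduces verbatim to one of \eqref{Psi_commute}--\eqref{Psi_G2}; this is essentially a direct application of the mutation formulas but is the step that requires the most care.
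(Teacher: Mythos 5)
Your proposal is correct and follows essentially the same route as the paper, whose entire proof of this proposition is the one-line citation ``This follows from Theorem \ref{thm:finite_type}''; you are simply making explicit the chain (Theorem \ref{thm:finite_type} $\to$ Corollary \ref{Thm_groupoid} $\to$ Theorem \ref{qdilog_tropical}) and the conjugation/transport step that the paper leaves implicit (and only acknowledges later in the phrasing of Theorem \ref{Thm_qdilog}, ``and their conjugation by some operators''). The care you flag about $c$-vectors and tropical signs restricting correctly to the rank-2 sub-loop is a genuine detail the paper glosses over, but it does not change the approach.
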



\begin{proof}
This follows from Theorem \ref{thm:finite_type}.
\end{proof}


\begin{Lem}[Folding Formulas for Quantum Dilogarithm]
We have for the compact dilogarithm
\begin{align}
\begin{split}
\Psi_q(x)&=\Psi_{q^2}(qx) \,\Psi_{q^2}(q^{-1}x)
 \\
&=\Psi_{q^3}(q^2x)\, \Psi_{q^3}(x) \,
 \Psi_{q^3}(q^{-2}x) 
 \\
 &=\prod_{j=1}^k \Psi_{q^k}(q^{k-2j+1} x) \;.
\end{split}
\label{Psi_fold}
\end{align}
and for the non-compact quantum dilogarithm
\begin{align}
\begin{split}
\Phi^{\hbar}(z)&=\Phi^{2\hbar}(z+i\pi \hbar)\, \Phi^{2\hbar}(z-i\pi \hbar) \\
&=
\Phi^{3\hbar}(z+2i\pi \hbar)\, \Phi^{3\hbar}(z)\, \Phi^{3\hbar}(z-2i\pi \hbar) \\
&=\prod_{j=1}^k \Phi^{k\hbar}(z+(k-2j+1)i \pi \hbar) \;.
\end{split}
\label{Phi.fold}
\end{align}

\end{Lem}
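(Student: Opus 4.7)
The plan is to verify the general-$k$ identity directly (from which the $k=2,3$ cases follow as specializations), treating the compact and non-compact cases separately.

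For the compact dilogarithm, I would expand $\Psi_q(x)=\prod_{n=0}^{\infty}(1+q^{2n+1}x)^{-1}$ using the definition \eqref{Psi_def}. The right-hand side becomes
\begin{align*}
\prod_{j=1}^k \Psi_{q^k}\bigl(q^{k-2j+1}x\bigr)
= \prod_{j=1}^k \prod_{n=0}^{\infty} \frac{1}{1+q^{k(2n+1)}\,q^{k-2j+1}\,x}
= \prod_{j=1}^k \prod_{n=0}^{\infty} \frac{1}{1+q^{2(kn+k-j)+1}\,x}.
\end{align*}
The key point is the bijection $(n,j)\mapsto m := kn+(k-j)$ between $\{0,1,2,\ldots\}\times\{1,\ldots,k\}$ and $\{0,1,2,\ldots\}$: every non-negative integer $m$ is uniquely written in this form by Euclidean division. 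So the double product reorganizes into $\prod_{m\geq 0}(1+q^{2m+1}x)^{-1}=\Psi_q(x)$.

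For the non-compact dilogarithm, I would work inside the exponent using the integral representation \eqref{Phi_def}. Summing over $j=1,\ldots,k$ the integrands for $\Phi^{k\hbar}(z+(k-2j+1)i\pi\hbar)$, one factors out $e^{-ipz}/(\sinh(\pi p)\sinh(\pi k\hbar p))$ and is left with the geometric sum
\begin{align*}
\sum_{j=1}^{k} e^{\pi\hbar p\,(k-2j+1)}
= e^{\pi\hbar p\,(k-1)}\,\frac{1-e^{-2\pi\hbar p k}}{1-e^{-2\pi\hbar p}}
= \frac{\sinh(\pi k\hbar p)}{\sinh(\pi\hbar p)}.
\end{align*}
After cancellation of $\sinh(\pi k\hbar p)$, the combined integrand is exactly the integrand of $\Phi^{\hbar}(z)$, yielding the claim. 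Two subsidiary checks must be made: (i) each shifted argument $z+(k-2j+1)i\pi\hbar$ lies in the strip $|\mathrm{Im}(\cdot)|<\pi(1+k\hbar)$ where the $\Phi^{k\hbar}$ integral converges, which follows from $|k-2j+1|\leq k-1<1+k\hbar$ combined with $|\mathrm{Im}(z)|<\pi(1+\hbar)\le \pi(1+k\hbar)$ (with a mild tightening of the hypothesis if necessary); and (ii) the contour $\Omega$ is valid for the summed integrand, which is immediate since the apparent extra poles of $1/\sinh(\pi k\hbar p)$ at $p=i\ell/(k\hbar)$ for $\ell\neq 0$ are removed by zeros of $\sinh(\pi k\hbar p)$ in the numerator after cancellation, so the only pole at $p=0$ is handled exactly as in the definition of $\Phi^{\hbar}$.

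I expect the only mild obstacle to be bookkeeping of the contour and the strip of validity in part (ii) of the non-compact case; the algebra itself reduces to the elementary geometric-series identity above, and the compact case is a clean re-indexing of an infinite product.
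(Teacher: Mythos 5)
Your proposal is correct and follows essentially the same route as the paper: the non-compact case is handled by the identical geometric-sum computation $\sum_{j=1}^{k} e^{(k-2j+1)\pi\hbar p}=\sinh(k\pi\hbar p)/\sinh(\pi\hbar p)$ inside the integral representation, and your compact-case argument is just the explicit product re-indexing that the paper subsumes under ``easily verified from the definitions.'' Your extra checks on the strip of validity and the contour are sound (indeed no tightening of the hypothesis is needed, since $|\mathrm{Im}(z)|+(k-1)\pi\hbar<\pi(1+k\hbar)$ follows directly from the triangle inequality), and they merely make precise what the paper leaves implicit.
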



\begin{proof}

These identities can easily be verified from definitions of the quantum dilogarithm functions (recall \eqref{Psi_def} and \eqref{Phi_def}).
For example, from  \eqref{Phi_def}
we find (in the region  $|\textrm{Im}(z)| < \pi(1+\hbar)$)
\begin{align}
\begin{split}
\prod_{j=1}^k \Phi^{k\hbar}\left(z+(k-2j+1)\hbar\right)
&  =\exp \left( -\frac{1}{4} \int_{\Omega}
 \frac{\sum_{i=1}^k e^{-i p (z+i\pi( k-2i+1)\hbar)}}{\sinh(\pi p) \sinh (k \pi \hbar p) } \frac{dp}{p}  \right) \\  &  =\exp \left( -\frac{1}{4} \int_{\Omega}
 \frac{e^{-i p z} \,  \sin \left(k\pi p \hbar\right) / \sin \left(\pi p \hbar\right)}{\sinh(\pi p) \sinh (k\pi \hbar p) } \frac{dp}{p}  \right) \\
   &  =\exp \left( -\frac{1}{4} \int_{\Omega}
 \frac{e^{-i p z} }{\sinh(\pi p) \sinh \left(\pi \hbar p\right) } \frac{dp}{p}  \right) \\
 &= \Phi^{\hbar}(z) \;.
 \end{split}
\end{align}
The identity can then be analytically continued to the whole complex plane.
\end{proof}


\begin{Lem}\label{Lem_B2G2}
The $B_2$ and $G_2$ identities follow from the $A_2$ identity as well as
\eqref{Psi_fold}.
\end{Lem}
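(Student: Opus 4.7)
The plan is to exploit the classical Dynkin-diagram foldings $A_3\twoheadrightarrow B_2$ (the $\mathbb{Z}/2$ symmetry of $A_3$) and $D_4\twoheadrightarrow G_2$ (triality, a $\mathbb{Z}/3$ symmetry of $D_4$) at the level of the quantum dilogarithm identities, with \eqref{Psi_fold} serving as the algebraic bridge between the folded and unfolded forms.

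For $B_2$ I would first invoke the $A_3$ quantum dilogarithm identity for a suitable Coxeter mutation sequence; since $A_3$ is simply-laced, its rank-$2$ sub-matrices are only of types $A_2$ or $A_1\times A_1$, and the argument of Proposition \ref{prop.tmp} applied to this case uses only the pentagon \eqref{Psi_pentagon} and the trivial commutativity \eqref{Psi_commute}. I would then specialise the $A_3$ identity to the $\mathbb{Z}/2$-invariant locus, i.e.\ identify the two cluster variables swapped by the $\mathbb{Z}/2$ symmetry, and apply the $k=2$ case of \eqref{Psi_fold} to recombine each pair of adjacent $\Psi_{q^2}$'s arising from the two folded nodes into a single $\Psi_q$; the result is exactly \eqref{Psi_B2}. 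For $G_2$ the argument is entirely analogous with $A_3$ replaced by $D_4$, the $\mathbb{Z}/2$ symmetry replaced by triality, and the $k=3$ case of \eqref{Psi_fold} used to combine each triple of $\Psi_{q^3}$'s (coming from the three outer nodes of $D_4$) into a single $\Psi_q$; the lone $\Psi_{q^2}$ factor in \eqref{Psi_G2} arises analogously from a $k=3$ refolding performed at base $q^2$. The $D_4$ identity itself reduces to the $A_2$ pentagon via the explicit computation in Appendix \ref{app.D4}.

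The main obstacle is bookkeeping: one must verify that the triality-specialised $D_4$ cluster variables (and the $\mathbb{Z}/2$-specialised $A_3$ ones) line up exactly with the monomials $Y_{12},\,Y_{1 2^2},\,Y_{1^2 2^3},\,Y_{1 2^3}$ on the right-hand side of \eqref{Psi_G2} (and with $Y_{12},\,Y_{1^2 2}$ in \eqref{Psi_B2}), including the correct $q$-shifts dictated by \eqref{Psi_fold} and the expected commutation weights. Once this matching is pinned down the rest of the derivation is mechanical, though I anticipate that a computer-assisted check may be convenient for the $G_2$/$D_4$ step.
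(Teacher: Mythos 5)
Your proposal is correct and follows essentially the same route as the paper: the paper likewise obtains the $G_2$ identity from the $D_4$ identity \eqref{D4_qdilog} via the triality folding $1,2,3\to 1$, $4\to 2$ combined with \eqref{Psi_fold}, proves \eqref{D4_qdilog} by repeated use of the pentagon (Appendix \ref{app.D4}), and dismisses $B_2$ as the ``similar and easier'' case (i.e.\ the $A_3$ folding you describe). The only difference is one of explicitness: you spell out the $A_3\to B_2$ step and flag the $q$-shift bookkeeping that the paper leaves implicit.
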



\begin{proof}
Let us work out the $G_2$ quantum dilogarithm identity, since the $B_2$ case is similar (and easier)\footnote{
During the preparation of this work we came to aware that 
the fact that $G_2$ identity follows from the pentagon identity is known to some experts, including Gen Kuroki \cite{Kuroki}.
We would like to thank him for correspondence.
}.
The $G_2$ identity \eqref{Psi_G2} follows from the $D_4$ identity
\begin{align}
\begin{split}
&\Psi_q(4) \Psi_q(1) \Psi_q(2) \Psi_q(3) =
\Psi_q(1) \Psi_q(2) \Psi_q(3) \Psi_q(1234)
\\
&\quad\times \Psi_q(234) \Psi_q(134) \Psi_q(124) \Psi_q(1234^2) \Psi_q(14)
\Psi_q(24) \Psi_q(34) \Psi_q(4)
\end{split}
\label{D4_qdilog}
\end{align}
by the substitution $1,2,3\to 1$, $4\to 2$ (this is a version of the standard folding trick for non-simply-laced Dynkin diagram).
The equation \eqref{D4_qdilog} in itself can easily be proven 
by repeated application of the pentagon relation, or from the general quantum dilogarithm identity \eqref{eq.qdilog_tropical}
(see appendix \ref{app.D4}).
\end{proof}


\begin{Thm}\label{Thm_qdilog}
Let $B$ be skew-symmetrizable matrix such that its cluster is of finite type. 
Then the associated quantum dilogarithm identities are generated by the following
three identities, and their conjugation by some operators:
$A_1\times A_1$ identity \eqref{Psi_commute},
$A_2$ identity (pentagon) \eqref{Psi_pentagon},
and the folding formula \eqref{Psi_fold} with $k=2,3$.

\begin{proof}

This follows from Lemma \ref{Lem_B2G2} and Proposition \ref{prop.tmp}.
\end{proof}
\end{Thm}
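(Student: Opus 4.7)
The plan is to chain together the two structural results proved earlier in the section. First I would invoke Proposition \ref{prop.tmp}, which reduces the problem: any quantum dilogarithm identity attached to a finite-type seed is a consequence of the rank-$2$ identities catalogued in Proposition \ref{Psi_rank2}, namely those of type $A_1 \times A_1$, $A_2$, $B_2$, and $G_2$. Thus it suffices to show that every identity in this finite list can be generated from just the $A_1 \times A_1$ identity \eqref{Psi_commute}, the $A_2$ pentagon identity \eqref{Psi_pentagon}, and the folding formula \eqref{Psi_fold} with $k = 2, 3$ (possibly conjugated by suitable operators).

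Two of the four cases are immediate: the $A_1 \times A_1$ and $A_2$ identities are explicit generators by hypothesis. For the remaining two cases, my strategy is to appeal directly to Lemma \ref{Lem_B2G2}, whose proof derives the $G_2$ identity \eqref{Psi_G2} from the $D_4$ identity \eqref{D4_qdilog} via the folding substitution $1, 2, 3 \mapsto 1$, $4 \mapsto 2$, together with the $k=3$ case of \eqref{Psi_fold}; analogously the $B_2$ identity \eqref{Psi_B2} arises from a simply-laced identity (of type $A_3$ or $D_3$) via the $k=2$ folding. The $D_4$ identity itself, being a finite-type quantum dilogarithm identity in the simply-laced setting, is known to follow from repeated application of the pentagon alone (the computation is spelled out in Appendix \ref{app.D4}), and so requires no extra ingredient beyond \eqref{Psi_pentagon}.

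Assembling these pieces gives the theorem: given any finite-type quantum dilogarithm identity, Proposition \ref{prop.tmp} expresses it through the four rank-$2$ building blocks; the $A_1 \times A_1$ and $A_2$ blocks are in the generating set; and Lemma \ref{Lem_B2G2} converts each $B_2$ or $G_2$ block into a combination of pentagons and the $k=2, 3$ folding formulas. The main obstacle I would expect is bookkeeping rather than conceptual: one must carefully track how the generic ``generated by'' statement of Proposition \ref{prop.tmp}, which is phrased in terms of loops in the exchange graph, translates into algebraic manipulations of the quantum dilogarithm operators, and in particular ensure that the conjugations allowed in the theorem statement suffice to embed each local rank-$2$ identity into the global quantum torus. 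Once that translation is made precise, the theorem follows by direct substitution from the two cited results.
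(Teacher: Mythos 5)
Your proposal is correct and follows exactly the same route as the paper, which proves the theorem in one line by combining Proposition \ref{prop.tmp} (reduction to the rank-$2$ identities of Proposition \ref{Psi_rank2}) with Lemma \ref{Lem_B2G2} (derivation of the $B_2$ and $G_2$ identities from the pentagon and the folding formulas). Your expanded bookkeeping, including the role of the $D_4$ identity and the folding substitution, matches the content of the cited lemma's proof.
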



\begin{Rem}
Since we can prove \eqref{Psi_pentagon}, \eqref{Psi_commute} and \eqref{Psi_fold} directly from the definition of 
the quantum dilogarithm, Theorem \ref{Thm_qdilog} gives a direct proof of
quantum dilogarithm identities for finite type quivers.
\end{Rem} 

\begin{Rem}

By the same arguments
Theorem \ref{Thm_qdilog} can be immediately generalized to several other versions of 
quantum dilogarithm identities. This 
includes the quantum dilogarithm identities in the so-called ``universal forms'' of \cite{KashaevN},
as well as those for the cyclic dilogarithm \cite{Ip:2014pva}.
\end{Rem}

\begin{Rem}
For seeds originating from four-dimensional $\mathcal{N}=2$ gauge theories (such as supersymmetric Yang-Mills theory
with and without matter), the Kontsevich-Soibelman wall crossing formula \cite{Kontsevich:2008fj} for the BPS spectrum 
gives rise to an an identity involving an infinite number of quantum dilogarithm (e.g.\ \cite{Dimofte:2009tm}). However, we have excluded those infinite closed loops from our definition of a fundamental group, see Remark \ref{Rem.finite}.
\end{Rem}

\appendix


\section{Tropical \texorpdfstring{$y$-variables for $D_4$}{y-variables for D4}}\label{app.D4}

In this appendix let us explicitly compute the dilogarithm identity for $D_4$, 
as an illustration of the procedure to write down the quantum dilogarithm identity \eqref{eq.qdilog_tropical}.

Let us start with the quiver
\begin{align}
1,2,3\longleftarrow 4  \;,
\end{align}
which gives a skew-symmetric matrix 
\begin{align}
B=\begin{pmatrix}
0&0 & 0 & -1 \\
0&0 & 0 & -1 \\
0&0 & 0 & -1 \\
+1&+1 & +1 & 0 \\
\end{pmatrix} \;. 
\end{align}
Quantum $y$-variables $Y_1, Y_2, Y_3, Y_4$ then satisfy 
\begin{align}
Y_{a}Y_4 = q^2 Y_4 Y_{a} \;, \quad
Y_{a} Y_{b}=Y_{b} Y_{a} \;, \quad (a,b=1,2,3) \;.
\end{align}

Let us consider a mutation sequence $(1,2,3,4,1,3,2,4,1,2,3,4,1,2,3,4)$ of length $16$. This represents a closed loop in the exchange graph.
Indeed, 
the classical $y$-variables (with $q=1$) transform as
{\fontsize{5}{5}\selectfont
\begin{align}
\begin{array}{lll}
Y(0) = \left(Y_1, Y_2, Y_3, Y_4\right) \;,  
 \\
Y(1) = \left( \frac{1}{Y_ 1},Y_ 2,Y_ 3,\left(Y_ 1+1\right) Y_ 4 \right) \;,  
 \\
Y(2) = \left(  \frac{1}{Y_ 1},\frac{1}{Y_ 2},Y_ 3,\left(Y_ 1+1\right) \left(Y_ 2+1\right) Y_ 4 \right) \;,  
 \\
Y(3) =\left( \frac{1}{Y_ 1},\frac{1}{Y_ 2},\frac{1}{Y_ 3},\left(Y_ 1+1\right) \
\left(Y_ 2+1\right) \left(Y_ 3+1\right) Y_ 4 \right)\;,  
\\
Y(4) = \left(\frac{\left(Y_ 1+1\right) \left(Y_ 2+1\right) \left(Y_ 3+1\right) \ 
Y_ 4+1}{Y_ 1},\frac{\left(Y_ 1+1\right) \left(Y_ 2+1\right) \
\left(Y_ 3+1\right) Y_ 4+1}{Y_ 2},\frac{\left(Y_ 1+1\right) \
\left(Y_ 2+1\right) \left(Y_ 3+1\right) Y_ 4+1}{Y_3},\frac{1}{\left(Y_ 1+1\right) \left(Y_ 2+1\right) \left(Y_ 3+1\right) Y_ 4}\right) \;,  
\\
Y(5) = \left(\frac{Y_ 1}{\left(Y_ 1+1\right) \left(Y_ 2+1\right) \left(Y_ 3+1\right) Y_ 4+1},\frac{\left(Y_ 1+1\right) \left(Y_ 2+1\right) \
\left(Y_ 3+1\right) Y_ 4+1}{Y_ 2},\frac{\left(Y_ 1+1\right) \
\left(Y_ 2+1\right) \left(Y_ 3+1\right) Y_ 4+1}{Y_3},\frac{\left(Y_ 2+1\right) \left(Y_ 3+1\right) Y_ 4+1}{Y_ 1 \
\left(Y_ 2+1\right) \left(Y_ 3+1\right) Y_ 4}\right) \;,  
\\
Y(6) = \left(\frac{Y_ 1}{\left(Y_ 1+1\right) \left(Y_ 2+1\right) \left(Y_ 3+1\right) Y_ 4+1},\frac{Y_ 2}{\left(Y_ 1+1\right) \left(Y_ 2+1\right) \
\left(Y_ 3+1\right) Y_ 4+1},\frac{\left(Y_ 1+1\right) \left(Y_ 2+1\right) \left(Y_ 3+1\right) Y_ 4+1}{Y_ 3},\frac{\left(\left(Y_ 1+1\right) \left(Y_ 3+1\right) Y_ 4+1\right) \left(\left(Y_ 2+1\right) \
\left(Y_ 3+1\right) Y_ 4+1\right)}{Y_ 1 Y_ 2 \left(Y_ 3+1\right) \
Y_ 4}\right) \;,  
\\
Y(7) = \left(\frac{Y_ 1}{\left(Y_ 1+1\right) \left(Y_ 2+1\right) \left(Y_ 3+1\right) Y_ 4+1},\frac{Y_ 2}{\left(Y_ 1+1\right) \left(Y_ 2+1\right) 
\left(Y_ 3+1\right) Y_ 4+1},\frac{Y_ 3}{\left(Y_ 1+1\right) \
\left(Y_ 2+1\right) \left(Y_ 3+1\right) Y_4+1},\frac{\left(\left(Y_ 1+1\right) \left(Y_ 2+1\right) Y_ 4+1\right) \left(\left(Y_ 1+1\right) \left(Y_ 3+1\right) Y_ 4+1\right) \
\left(\left(Y_ 2+1\right) \left(Y_ 3+1\right) Y_ 4+1\right)}{Y_1
Y_ 2 Y_ 3 Y_ 4}\right)\;,  
\\
Y(8) =\left(\frac{\left(Y_ 1+1\right) \left(Y_ 2+1\right) \left(Y_ 3+1\right) \
Y_ 4^2+\left(Y_ 1+Y_ 2+Y_ 3+2\right) Y_ 4+1}{Y_ 2 Y_ 3 Y_4},\frac{\left(Y_ 1+1\right) \left(Y_ 2+1\right) \left(Y_ 3+1\right) Y_ 4^2+\left(Y_ 1+Y_ 2+Y_ 3+2\right) Y_ 4+1}{Y_ 1 Y_ 3 Y_4},\frac{\left(Y_ 1+1\right) \left(Y_ 2+1\right) \left(Y_ 3+1\right) Y_ 4^2+\left(Y_ 1+Y_ 2+Y_ 3+2\right) Y_ 4+1}{Y_ 1 Y_ 2 Y_4},\frac{Y_ 1 Y_ 2 Y_ 3 Y_ 4}{\left(\left(Y_ 1+1\right) \left(Y_ 2+1\
\right) Y_ 4+1\right) \left(\left(Y_ 1+1\right) \left(Y_ 3+1\right) Y_ 4+1\right) \left(\left(Y_ 2+1\right) \left(Y_ 3+1\right) \
Y_ 4+1\right)}\right) \;,  
 \\
Y(9) =\left(\frac{Y_ 2 Y_ 3 Y_ 4}{\left(Y_ 1+1\right) \left(Y_ 2+1\right) \
\left(Y_ 3+1\right) Y_ 4^2+\left(Y_ 1+Y_ 2+Y_ 3+2\right) Y_4+1},\frac{\left(Y_ 1+1\right) \left(Y_ 2+1\right) \left(Y_ 3+1\right) Y_ 4^2+\left(Y_ 1+Y_ 2+Y_ 3+2\right) Y_ 4+1}{Y_ 1 Y_ 3 Y_4},\frac{\left(Y_ 1+1\right) \left(Y_ 2+1\right) \left(Y_ 3+1\right) Y_ 4^2+\left(Y_ 1+Y_ 2+Y_ 3+2\right) Y_ 4+1}{Y_ 1 Y_ 2 Y_4},\frac{Y_ 1 \left(\left(Y_ 1+1\right) Y_ 4+1\right)}{\left(\left(Y_ 1+1\right) \left(Y_ 2+1\right) Y_ 4+1\right) 
\left(\left(Y_ 1+1\right) \left(Y_ 3+1\right) Y_ 4+1\right)}\right) \;,  
\\
Y(10) =\left(\frac{Y_ 2 Y_ 3 Y_ 4}{\left(Y_ 1+1\right) \left(Y_ 2+1\right) \
\left(Y_ 3+1\right) Y_ 4^2+\left(Y_ 1+Y_ 2+Y_ 3+2\right) Y_ 4+1},\frac{Y_ 1 Y_ 3 Y_ 4}{\left(Y_ 1+1\right) \left(Y_ 2+1\right) \
\left(Y_ 3+1\right) Y_ 4^2+\left(Y_ 1+Y_ 2+Y_ 3+2\right) Y_4+1},\frac{\left(Y_ 1+1\right) \left(Y_ 2+1\right) \left(Y_ 3+1\right) Y_ 4^2+\left(Y_ 1+Y_ 2+Y_ 3+2\right) Y_ 4+1}{Y_ 1 Y_ 2 Y_4},\frac{\left(\left(Y_ 1+1\right) Y_ 4+1\right) \left(\left(Y_ 2+1\
\right) Y_ 4+1\right)}{Y_ 3 Y_ 4 \left(\left(Y_ 1+1\right) 
\left(Y_ 2+1\right) Y_ 4+1\right)}\right) \;,  
\\
Y(11) =\left(
\frac{Y_ 2 Y_ 3 Y_ 4}{\left(Y_ 1+1\right) \left(Y_ 2+1\right) 
\left(Y_ 3+1\right) Y_ 4^2+\left(Y_ 1+Y_ 2+Y_ 3+2\right) Y_4+1},
\frac{Y_ 1 Y_ 3 Y_ 4}{\left(Y_ 1+1\right) \left(Y_ 2+1\right) 
\left(Y_ 3+1\right) Y_ 4^2+\left(Y_ 1+Y_ 2+Y_ 3+2\right) Y_4+1},
\frac{Y_ 1 Y_ 2 Y_ 4}{\left(Y_ 1+1\right) \left(Y_ 2+1\right) \
\left(Y_ 3+1\right) Y_ 4^2+\left(Y_ 1+Y_ 2+Y_ 3+2\right) Y_4+1},
\frac{\left(\left(Y_ 1+1\right) Y_ 4+1\right) \left(\left(Y_2+1\right) Y_ 4+1\right) \left(\left(Y_ 3+1\right) Y_ 4+1\right)}{Y_ 1 Y_ 2 Y_ 3 Y_ 4^2}
\right)  \;,  
\\
Y(12) =\left(\frac{Y_ 4+1}{Y_ 1 Y_ 4},\frac{Y_ 4+1}{Y_ 2 Y_ 4},\frac{Y_ 4+1}{Y_ 3 
Y_ 4},\frac{Y_ 1 Y_ 2 Y_ 3 Y_ 4^2}{\left(\left(Y_ 1+1\right) Y_ 4+1\right) \left(\left(Y_ 2+1\right) Y_ 4+1\right) \left(\left(Y_ 3+1\right) Y_ 4+1\right)}\right)  \;,  
\\
Y(13) =\left(\frac{Y_ 1 Y_ 4}{Y_ 4+1},\frac{Y_ 4+1}{Y_ 2 Y_ 4},\frac{Y_ 4+1}{Y_ 3 
Y_ 4},\frac{Y_ 2 Y_ 3 Y_ 4}{\left(\left(Y_ 2+1\right) Y_ 4+1\right) \
\left(\left(Y_ 3+1\right) Y_ 4+1\right)}\right) \;,  
\\
Y(14) =\left(\frac{Y_ 1 Y_ 4}{Y_ 4+1},\frac{Y_ 2 Y_ 4}{Y_ 4+1},\frac{Y_ 4+1}{Y_ 3 
Y_ 4},\frac{Y_ 3}{\left(Y_ 3+1\right) Y_ 4+1}\right)  \;,  
\\
Y(15) =\left(\frac{Y_ 1 Y_ 4}{Y_ 4+1},\frac{Y_ 2 Y_ 4}{Y_ 4+1},\frac{Y_ 3 Y_ 4}{Y_4+1},\frac{1}{Y_ 4}\right) \;,  %
\\
Y(16) =\left(Y_ 1,Y_ 2,Y_ 3,Y_4\right)  \;.
\\
\end{array}
\end{align}
}
The $c$-vectors $c_t:=c(y_{k_t}(t))$ and tropical signs $\epsilon_t$ are then computed to be
{\small
\begin{align}
\begin{array}{lll}
c_{1}=(1,0,0,0),&\epsilon_{1}=+\;, \\
c_{2}=(0,1,0,0),&\epsilon_{2}=+\;, \\
c_{3}=(0,0,1,0),&\epsilon_{3}=+\;, \\
c_{4}=(0,0,0,1),&\epsilon_{4}=+\;, \\
c_{5}=(1,0,0,0),&\epsilon_{5}=-\;, \\
c_{6}=(0,1,0,0),&\epsilon_{6}=-\;, \\
c_{7}=(0,0,1,0),&\epsilon_{7}=-\;, \\
c_{8}=(1,1,1,1),&\epsilon_{8}=-\;, \\
c_{9}=(0,1,1,1),&\epsilon_{9}=-\;, \\
c_{10}=(1,0,1,1),&\epsilon_{10}=-\;, \\
c_{11}=(1,1,0,1),&\epsilon_{11}=-\;, \\
c_{12}=(1,1,1,2),&\epsilon_{12}=-\;, \\
c_{13}=(1,0,0,1),&\epsilon_{13}=-\;, \\
c_{14}=(0,1,0,1),&\epsilon_{14}=-\;, \\
c_{15}=(0,0,1,1),&\epsilon_{15}=- \;,\\
c_{16}=(0,0,0,1),&\epsilon_{16}=- \;. \\
\end{array}
\end{align}
}
By substituting these data into the \eqref{eq.qdilog_tropical}, we obtain the $D_4$ quantum dilogarithm identity \eqref{D4_qdilog}.


\section{Explicit Proof of Conjecture for \texorpdfstring{$(A_3, A_3)$}{(A3,A3)} Quiver}\label{app.A3A3}

In this appendix we provide details of the computer-aided verification of the Conjecture \ref{conj1}
for the length $36$ loop of the exchange diagram of the $(A_3, A_3)$ quiver. 
The quiver is shown in Figure \ref{fig.A3A3quiver}
\begin{figure}[htbp]
\includegraphics[scale=0.4]{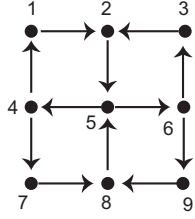}
\caption{Quiver associated with a pair of Lie algebra $(A_3, A_3)$.}
\label{fig.A3A3quiver}
\end{figure}
The length-$36$ loop of the exchange graph is given by the mutation sequence
\begin{align}
1, 3, 5, 7, 9, 2, 4, 6, 8, 1, 3, 5, 7, 9, 2, 4, 6, 8, 1, 3, 5, 7, 9, \
2, 4, 6, 8, 1, 3, 5, 7, 9, 2, 4, 6, 8  \;.
\end{align}
The associated quantum dilogarithm identity can be written as 
\begin{align}
\begin{split}
&
(1)(3)(5)(7)(123)(9)(45)(56)(789)(23)(12)(456)(89)(78)(2)(4)(6)(8)
 \\   
 & 
 = 
(2)(4)(6)(8)(14)(36)(258)(47)(69)(5
8)(147)(369)(25)(1)(3)(5)(7)(9)
\end{split}  
\label{long_id}
\end{align}
Here to save space we have dropped $\Psi_q$ from the notation:
for example $(789)$ represents $\Psi_q(789)$.

This relation can be shown by repeated use of $A_2$ and $A_1\times A_1$ identities.
It turns out that we need to use such identities approximately $280$ times, 
as shown below:

{\fontsize{5}{6}\selectfont 
\begin{align*}
&(1)(3)(5)(7)(123)(9)(45)(56)(789)(23)(12)(456)(89)(78)(2)(4)(6)(8)
 \\   
 & = 
(1)(3)(5)(7)(123)(45)(9)(56)(789)(23)(12)(456)(89)(78)(2)(4)(6)(8)
 \\   
 & = 
(1)(3)(5)(7)(123)(45)(56)(569)(9)(789)(23)(12)(456)(89)(78)(2)(4)(6)(8)
 \\   
 & = 
(1)(3)(5)(123)(7)(45)(56)(569)(9)(789)(23)(12)(456)(89)(78)(2)(4)(6)(8)
 \\   
 & = 
(1)(3)(5)(123)(45)(457)(7)(56)(569)(9)(789)(23)(12)(456)(89)(78)(2)(4)(6)(8)
 \\   
 & = 
(1)(3)(5)(123)(45)(457)(56)(7)(569)(9)(789)(23)(12)(456)(89)(78)(2)(4)(6)(8)
 \\   
 & = 
(1)(3)(5)(123)(45)(457)(56)(569)(7)(9)(789)(23)(12)(456)(89)(78)(2)(4)(6)(8)
 \\   
 & = 
(1)(3)(123)(1235)(5)(45)(457)(56)(569)(7)(9)(789)(23)(12)(456)(89)(78)(2)(4)(6)(8)
 \\   
 & = 
(1)(3)(123)(1235)(5)(45)(457)(56)(569)(7)(9)(789)(23)(12)(456)(89)(2)(78)(4)(6)(8)
 \\   
 & = 
(1)(3)(123)(1235)(5)(45)(457)(56)(569)(7)(9)(789)(23)(12)(456)(2)(89)(78)(4)(6)(8)
 \\   
 & = 
(1)(3)(123)(1235)(5)(45)(457)(56)(569)(7)(9)(789)(23)(12)(2)(2456)(456)(89)(78)(4)(6)(8)
 \\   
 & = 
(1)(3)(123)(1235)(5)(45)(457)(56)(569)(7)(9)(789)(23)(12)(2)(2456)(456)(78)(89)(4)(6)(8)
 \\   
 & = 
(1)(3)(123)(1235)(5)(45)(457)(56)(569)(7)(9)(789)(23)(12)(2)(2456)(78)(456)(89)(4)(6)(8)
 \\   
 & = 
(1)(3)(123)(1235)(5)(45)(457)(56)(569)(7)(9)(789)(23)(12)(2)(78)(2456)(456)(89)(4)(6)(8)
 \\   
 & = 
(1)(3)(123)(1235)(5)(45)(457)(56)(569)(7)(9)(789)(23)(12)(78)(2)(2456)(456)(89)(4)(6)(8)
 \\   
 & = 
(1)(3)(123)(1235)(5)(45)(457)(56)(569)(7)(9)(789)(23)(78)(12)(2)(2456)(456)(89)(4)(6)(8)
 \\   
 & = 
(1)(3)(123)(1235)(5)(45)(457)(56)(569)(7)(9)(789)(78)(23)(12)(2)(2456)(456)(89)(4)(6)(8)
 \\   
 & = 
(1)(3)(123)(1235)(5)(45)(457)(56)(569)(7)(78)(9)(23)(12)(2)(2456)(456)(89)(4)(6)(8)
 \\   
 & = 
(1)(3)(123)(1235)(5)(45)(457)(56)(569)(7)(78)(23)(9)(12)(2)(2456)(456)(89)(4)(6)(8)
 \\   
 & = 
(1)(3)(123)(1235)(5)(45)(457)(56)(569)(7)(78)(23)(12)(9)(2)(2456)(456)(89)(4)(6)(8)
 \\   
 & = 
(1)(3)(123)(1235)(5)(45)(457)(56)(569)(7)(78)(23)(12)(2)(9)(2456)(456)(89)(4)(6)(8)
 \\   
 & = 
(1)(3)(123)(1235)(5)(45)(457)(56)(569)(7)(78)(23)(12)(2)(2456)(24569)(9)(456)(89)(4)(6)(8)
 \\   
 & = 
(1)(3)(123)(1235)(5)(45)(457)(5
6)(569)(7)(78)(23)(12)(2)(2456)(2
4569)(456)(4569)(9)(89)(4)(6)(8)
 \\   
 & = 
(1)(3)(123)(1235)(5)(45)(457)(5
6)(569)(7)(78)(23)(12)(2)(2456)(2
4569)(456)(4569)(9)(89)(6)(4)(8)
 \\   
 & = 
(1)(3)(123)(1235)(5)(45)(457)(5
6)(569)(7)(78)(23)(12)(2)(2456)(2
4569)(456)(4569)(9)(89)(6)(8)(4)
 \\   
 & = 
(1)(3)(123)(1235)(5)(45)(457)(5
6)(569)(7)(78)(23)(12)(2)(2456)(2
4569)(456)(4569)(9)(89)(8)(6)(4)
 \\   
 & = 
(1)(3)(123)(1235)(5)(45)(457)(5
6)(569)(7)(78)(23)(12)(2)(2456)(2
4569)(456)(4569)(8)(9)(6)(4)
 \\   
 & = 
(1)(3)(123)(1235)(5)(45)(457)(5
6)(569)(7)(78)(23)(12)(2)(2456)(2
4569)(456)(4569)(8)(6)(69)(9)(4)
 \\   
 & = 
(1)(3)(123)(1235)(5)(45)(457)(5
6)(569)(7)(78)(23)(12)(2)(2456)(2
4569)(456)(4569)(8)(6)(69)(4)(9)
 \\   
 & = 
(1)(3)(123)(1235)(5)(45)(457)(5
6)(569)(7)(78)(23)(12)(2)(2456)(2
4569)(456)(8)(4569)(6)(69)(4)(9)
 \\   
 & = 
(1)(3)(123)(1235)(5)(45)(457)(5
6)(569)(7)(78)(23)(12)(2)(2456)(2
4569)(8)(4568)(456)(4569)(6)(6
9)(4)(9)
 \\   
 & = 
(1)(3)(123)(1235)(5)(45)(457)(5
6)(569)(7)(78)(23)(12)(2)(245
6)(8)(24569)(4568)(456)(456
9)(6)(69)(4)(9)
 \\   
 & = 
(1)(3)(123)(1235)(5)(45)(457)(5
6)(569)(7)(78)(23)(12)(2)(8)(2456
8)(2456)(24569)(4568)(456)(45
69)(6)(69)(4)(9)
 \\   
 & = 
(1)(3)(123)(1235)(5)(45)(457)(5
6)(569)(7)(78)(23)(12)(8)(2)(2456
8)(2456)(24569)(4568)(456)(45
69)(6)(69)(4)(9)
 \\   
 & = 
(1)(3)(123)(1235)(5)(45)(457)(5
6)(569)(7)(78)(23)(8)(12)(2)(2456
8)(2456)(24569)(4568)(456)(45
69)(6)(69)(4)(9)
 \\   
 & = 
(1)(3)(123)(1235)(5)(45)(457)(5
6)(569)(7)(78)(8)(23)(12)(2)(2456
8)(2456)(24569)(4568)(456)(45
69)(6)(69)(4)(9)
 \\   
 & = 
(1)(3)(123)(1235)(5)(45)(457)(5
6)(569)(8)(7)(23)(12)(2)(24568)(2
456)(24569)(4568)(456)(456
9)(6)(69)(4)(9)
 \\   
 & = 
(1)(3)(123)(1235)(5)(45)(457)(5
6)(569)(8)(23)(7)(12)(2)(24568)(2
456)(24569)(4568)(456)(456
9)(6)(69)(4)(9)
 \\   
 & = 
(1)(3)(123)(1235)(5)(45)(457)(5
6)(569)(8)(23)(12)(7)(2)(24568)(2
456)(24569)(4568)(456)(456
9)(6)(69)(4)(9)
 \\   
 & = 
(1)(3)(123)(1235)(5)(45)(457)(5
6)(569)(8)(23)(12)(2)(7)(24568)(2
456)(24569)(4568)(456)(456
9)(6)(69)(4)(9)
 \\   
 & = 
(1)(3)(123)(1235)(5)(45)(457)(5
6)(569)(8)(23)(12)(2)(24568)(7)(2
456)(24569)(4568)(456)(456
9)(6)(69)(4)(9)
 \\   
 & = 
(1)(3)(123)(1235)(5)(45)(457)(5
6)(569)(8)(23)(12)(2)(24568)(245
6)(24567)(7)(24569)(4568)(45
6)(4569)(6)(69)(4)(9)
 \\   
 & = 
(1)(3)(123)(1235)(5)(45)(457)(5
6)(569)(8)(23)(12)(2)(24568)(245
6)(24567)(24569)(245679)(7)(4
568)(456)(4569)(6)(69)(4)(9)
 \\   
 & = 
(1)(3)(123)(1235)(5)(45)(457)(5
6)(569)(8)(23)(12)(2)(24568)(245
6)(24567)(24569)(245679)(456
8)(7)(456)(4569)(6)(69)(4)(9)
 \\   
 & = 
(1)(3)(123)(1235)(5)(45)(457)(5
6)(569)(8)(23)(12)(2)(24568)(245
6)(24567)(24569)(245679)(456
8)(456)(4567)(7)(4569)(6)(6
9)(4)(9)
 \\   
 & = 
(1)(3)(123)(1235)(5)(45)(457)(5
6)(569)(8)(23)(12)(2)(24568)(245
6)(24567)(24569)(245679)(456
8)(456)(4567)(4569)(4567
9)(7)(6)(69)(4)(9)
 \\   
 & = 
(1)(3)(123)(1235)(5)(45)(457)(5
6)(569)(8)(23)(12)(2)(24568)(245
6)(24567)(24569)(245679)(456
8)(456)(4567)(4569)(4567
9)(6)(7)(69)(4)(9)
 \\   
 & = 
(1)(3)(123)(1235)(5)(45)(457)(5
6)(569)(8)(23)(12)(2)(24568)(245
6)(24567)(24569)(245679)(456
8)(456)(4567)(4569)(45679)(6)(6
9)(7)(4)(9)
 \\   
 & = 
(1)(3)(123)(1235)(5)(45)(457)(5
6)(569)(8)(23)(12)(2)(24568)(245
6)(24567)(24569)(245679)(456
8)(456)(4567)(4569)(45679)(6)(6
9)(4)(47)(7)(9)
 \\   
 & = 
(1)(3)(123)(1235)(5)(45)(457)(5
6)(569)(8)(23)(12)(2)(24568)(245
6)(24567)(24569)(245679)(456
8)(456)(4567)(4569)(4567
9)(6)(4)(69)(47)(7)(9)
 \\   
 & = 
(1)(3)(123)(1235)(5)(45)(457)(5
6)(569)(8)(23)(12)(2)(24568)(245
6)(24567)(24569)(245679)(456
8)(456)(4567)(4569)(4567
9)(4)(6)(69)(47)(7)(9)
 \\   
 & = 
(1)(3)(123)(1235)(5)(45)(457)(5
6)(569)(8)(23)(12)(2)(24568)(245
6)(24567)(24569)(245679)(456
8)(456)(4567)(4569)(4)(4567
9)(6)(69)(47)(7)(9)
 \\   
 & = 
(1)(3)(123)(1235)(5)(45)(457)(5
6)(8)(569)(23)(12)(2)(24568)(245
6)(24567)(24569)(245679)(456
8)(456)(4567)(4569)(4)(4567
9)(6)(69)(47)(7)(9)
 \\   
 & = 
(1)(3)(123)(1235)(5)(45)(457)(5
6)(8)(23)(569)(12)(2)(24568)(245
6)(24567)(24569)(245679)(456
8)(456)(4567)(4569)(4)(4567
9)(6)(69)(47)(7)(9)
 \\   
 & = 
(1)(3)(123)(1235)(5)(45)(457)(5
6)(8)(23)(12)(12569)(569)(2)(245
68)(2456)(24567)(24569)(2456
79)(4568)(456)(4567)(4569)(4)(4
5679)(6)(69)(47)(7)(9)
 \\   
 & = 
(1)(3)(123)(1235)(5)(45)(457)(5
6)(8)(23)(12)(12569)(2)(2569)(56
9)(24568)(2456)(24567)(2456
9)(245679)(4568)(456)(4567)(4
569)(4)(45679)(6)(69)(47)(7)(9)
 \\   
 & = 
(1)(3)(123)(1235)(5)(45)(457)(5
6)(8)(23)(12)(12569)(2)(2569)(24
568)(24556689)(569)(2456)(24
567)(24569)(245679)(4568)(45
6)(4567)(4569)(4)(45679)(6)(6
9)(47)(7)(9)
 \\   
 & = 
(1)(3)(123)(1235)(5)(45)(457)(5
6)(8)(23)(12)(12569)(2)(2569)(24
568)(24556689)(2456)(245566
9)(569)(24567)(24569)(24567
9)(4568)(456)(4567)(4569)(4)(45
679)(6)(69)(47)(7)(9)
 \\   
 & = 
(1)(3)(123)(1235)(5)(45)(457)(5
6)(8)(23)(12)(12569)(2)(2569)(24
568)(24556689)(2456)(245566
9)(24567)(24556679)(569)(245
69)(245679)(4568)(456)(456
7)(4569)(4)(45679)(6)(69)(4
7)(7)(9)
 \\   
 & = 
(1)(3)(123)(1235)(5)(45)(457)(5
6)(8)(23)(12)(12569)(2)(2569)(24
568)(24556689)(2456)(245566
9)(24567)(24556679)(24569)(5
69)(245679)(4568)(456)(456
7)(4569)(4)(45679)(6)(69)(4
7)(7)(9)
 \\   
 & = 
(1)(3)(123)(1235)(5)(45)(457)(5
6)(8)(23)(12)(12569)(2)(2569)(24
568)(24556689)(2456)(245566
9)(24567)(24556679)(24569)(2
45679)(569)(4568)(456)(456
7)(4569)(4)(45679)(6)(69)(4
7)(7)(9)
 \\   
 & = 
(1)(3)(123)(1235)(5)(45)(457)(5
6)(8)(23)(12)(12569)(2)(2569)(24
568)(24556689)(2456)(245566
9)(24567)(24556679)(24569)(2
45679)(4568)(569)(456)(456
7)(4569)(4)(45679)(6)(69)(4
7)(7)(9)
 \\   
 & = 
(1)(3)(123)(1235)(5)(45)(457)(5
6)(8)(23)(12)(12569)(2)(2569)(24
568)(24556689)(2456)(245566
9)(24567)(24556679)(24569)(2
45679)(4568)(456)(569)(456
7)(4569)(4)(45679)(6)(69)(4
7)(7)(9)
 \\   
 & = 
(1)(3)(123)(1235)(5)(45)(457)(5
6)(8)(23)(12)(12569)(2)(2569)(24
568)(24556689)(2456)(245566
9)(24567)(24556679)(24569)(2
45679)(4568)(456)(4567)(56
9)(4569)(4)(45679)(6)(69)(4
7)(7)(9)
 \\   
 & = 
(1)(3)(123)(1235)(5)(45)(457)(5
6)(8)(23)(12)(12569)(2)(2569)(24
568)(24556689)(2456)(245566
9)(24567)(24556679)(24569)(2
45679)(4568)(456)(4567)(4)(56
9)(45679)(6)(69)(47)(7)(9)
 \\   
 & = 
(1)(3)(123)(1235)(5)(45)(457)(5
6)(8)(23)(12)(12569)(2)(2569)(24
568)(24556689)(2456)(245566
9)(24567)(24556679)(24569)(2
45679)(4568)(456)(4)(4567)(56
9)(45679)(6)(69)(47)(7)(9)
 \\   
 & = 
(1)(3)(123)(1235)(5)(45)(457)(8)(5
68)(56)(23)(12)(12569)(2)(256
9)(24568)(24556689)(2456)(24
55669)(24567)(24556679)(245
69)(245679)(4568)(456)(4)(456
7)(569)(45679)(6)(69)(47)(7)(9)
 \\   
 & = 
(1)(3)(123)(1235)(5)(45)(457)(8)(5
68)(23)(56)(12)(12569)(2)(256
9)(24568)(24556689)(2456)(24
55669)(24567)(24556679)(245
69)(245679)(4568)(456)(4)(456
7)(569)(45679)(6)(69)(47)(7)(9)
 \\   
 & = 
(1)(3)(123)(1235)(5)(45)(457)(8)(5
68)(23)(12)(1256)(56)(1256
9)(2)(2569)(24568)(2455668
9)(2456)(2455669)(24567)(245
56679)(24569)(245679)(456
8)(456)(4)(4567)(569)(4567
9)(6)(69)(47)(7)(9)
 \\   
 & = 
(1)(3)(123)(1235)(5)(45)(457)(8)(5
68)(23)(12)(1256)(12569)(5
6)(2)(2569)(24568)(2455668
9)(2456)(2455669)(24567)(245
56679)(24569)(245679)(456
8)(456)(4)(4567)(569)(4567
9)(6)(69)(47)(7)(9)
 \\   
 & = 
(1)(3)(123)(1235)(5)(45)(457)(8)(5
68)(23)(12)(1256)(12569)(2)(25
6)(56)(2569)(24568)(2455668
9)(2456)(2455669)(24567)(245
56679)(24569)(245679)(456
8)(456)(4)(4567)(569)(4567
9)(6)(69)(47)(7)(9)
 \\   
 & = 
(1)(3)(123)(1235)(5)(45)(457)(8)(5
68)(23)(12)(1256)(12569)(2)(25
6)(2569)(56)(24568)(2455668
9)(2456)(2455669)(24567)(245
56679)(24569)(245679)(456
8)(456)(4)(4567)(569)(4567
9)(6)(69)(47)(7)(9)
 \\   
 & = 
(1)(3)(123)(1235)(5)(45)(457)(8)(5
68)(23)(12)(1256)(12569)(2)(25
6)(2569)(24568)(2455668)(5
6)(24556689)(2456)(245566
9)(24567)(24556679)(24569)(2
45679)(4568)(456)(4)(4567)(56
9)(45679)(6)(69)(47)(7)(9)
 \\   
 & = 
(1)(3)(123)(1235)(5)(45)(457)(8)(5
68)(23)(12)(1256)(12569)(2)(25
6)(2569)(24568)(2455668)(245
56689)(56)(2456)(2455669)(24
567)(24556679)(24569)(24567
9)(4568)(456)(4)(4567)(569)(45
679)(6)(69)(47)(7)(9)
 \\   
 & = 
(1)(3)(123)(1235)(5)(45)(457)(8)(5
68)(23)(12)(1256)(12569)(2)(25
6)(2569)(24568)(2455668)(245
56689)(2456)(56)(2455669)(24
567)(24556679)(24569)(24567
9)(4568)(456)(4)(4567)(569)(45
679)(6)(69)(47)(7)(9)
 \\   
 & = 
(1)(3)(123)(1235)(5)(45)(457)(8)(5
68)(23)(12)(1256)(12569)(2)(25
6)(2569)(24568)(2455668)(245
56689)(2456)(56)(2455669)(24
567)(24569)(24556679)(24567
9)(4568)(456)(4)(4567)(569)(45
679)(6)(69)(47)(7)(9)
 \\   
 & = 
(1)(3)(123)(1235)(5)(45)(457)(8)(5
68)(23)(12)(1256)(12569)(2)(25
6)(2569)(24568)(2455668)(245
56689)(2456)(56)(2455669)(24
569)(24567)(24556679)(24567
9)(4568)(456)(4)(4567)(569)(45
679)(6)(69)(47)(7)(9)
 \\   
 & = 
(1)(3)(123)(1235)(5)(45)(457)(8)(5
68)(23)(12)(1256)(12569)(2)(25
6)(2569)(24568)(2455668)(245
56689)(2456)(24569)(56)(2456
7)(24556679)(245679)(4568)(4
56)(4)(4567)(569)(45679)(6)(6
9)(47)(7)(9)
 \\   
 & = 
(1)(3)(123)(1235)(5)(45)(457)(8)(5
68)(23)(12)(1256)(12569)(2)(25
6)(2569)(24568)(2455668)(245
56689)(2456)(24569)(24567)(5
6)(24556679)(245679)(4568)(4
56)(4)(4567)(569)(45679)(6)(6
9)(47)(7)(9)
 \\   
 & = 
(1)(3)(123)(1235)(5)(45)(457)(8)(5
68)(23)(12)(1256)(12569)(2)(25
6)(2569)(24568)(2455668)(245
56689)(2456)(24569)(24567)(2
45679)(56)(4568)(456)(4)(456
7)(569)(45679)(6)(69)(47)(7)(9)
 \\   
 & = 
(1)(3)(123)(1235)(5)(45)(457)(8)(5
68)(23)(12)(1256)(12569)(2)(25
6)(2569)(24568)(2455668)(245
56689)(2456)(24569)(24567)(2
45679)(4568)(56)(456)(4)(456
7)(569)(45679)(6)(69)(47)(7)(9)
 \\   
 & = 
(1)(3)(123)(1235)(5)(45)(457)(8)(5
68)(23)(12)(1256)(12569)(2)(25
6)(2569)(24568)(2455668)(245
56689)(2456)(24569)(24567)(2
45679)(4568)(4)(56)(4567)(56
9)(45679)(6)(69)(47)(7)(9)
 \\   
 & = 
(1)(3)(123)(1235)(5)(45)(457)(8)(2
3)(568)(12)(1256)(12569)(2)(25
6)(2569)(24568)(2455668)(245
56689)(2456)(24569)(24567)(2
45679)(4568)(4)(56)(4567)(56
9)(45679)(6)(69)(47)(7)(9)
 \\   
 & = 
(1)(3)(123)(1235)(5)(45)(457)(8)(2
3)(12)(12568)(568)(1256)(1256
9)(2)(256)(2569)(24568)(24556
68)(24556689)(2456)(24569)(2
4567)(245679)(4568)(4)(56)(45
67)(569)(45679)(6)(69)(47)(7)(9)
 \\   
 & = 
(1)(3)(123)(1235)(5)(45)(457)(8)(2
3)(12)(12568)(1256)(568)(1256
9)(2)(256)(2569)(24568)(24556
68)(24556689)(2456)(24569)(2
4567)(245679)(4568)(4)(56)(45
67)(569)(45679)(6)(69)(47)(7)(9)
 \\   
 & = 
(1)(3)(123)(1235)(5)(45)(457)(8)(2
3)(12)(12568)(1256)(12569)(56
8)(2)(256)(2569)(24568)(24556
68)(24556689)(2456)(24569)(2
4567)(245679)(4568)(4)(56)(45
67)(569)(45679)(6)(69)(47)(7)(9)
 \\   
 & = 
(1)(3)(123)(1235)(5)(45)(457)(8)(2
3)(12)(12568)(1256)(12569)(2)(2
568)(568)(256)(2569)(24568)(2
455668)(24556689)(2456)(245
69)(24567)(245679)(4568)(4)(5
6)(4567)(569)(45679)(6)(69)(4
7)(7)(9)
 \\   
 & = 
(1)(3)(123)(1235)(5)(45)(457)(8)(2
3)(12)(12568)(1256)(12569)(2)(2
568)(256)(568)(2569)(24568)(2
455668)(24556689)(2456)(245
69)(24567)(245679)(4568)(4)(5
6)(4567)(569)(45679)(6)(69)(4
7)(7)(9)
 \\   
 & = 
(1)(3)(123)(1235)(5)(45)(457)(8)(2
3)(12)(12568)(1256)(12569)(2)(2
568)(256)(2569)(568)(24568)(2
455668)(24556689)(2456)(245
69)(24567)(245679)(4568)(4)(5
6)(4567)(569)(45679)(6)(69)(4
7)(7)(9)
 \\   
 & = 
(1)(3)(123)(1235)(5)(45)(457)(8)(2
3)(12)(12568)(1256)(12569)(2)(2
568)(256)(2569)(24568)(568)(2
455668)(24556689)(2456)(245
69)(24567)(245679)(4568)(4)(5
6)(4567)(569)(45679)(6)(69)(4
7)(7)(9)
 \\   
 & = 
(1)(3)(123)(1235)(5)(45)(457)(8)(2
3)(12)(12568)(1256)(12569)(2)(2
568)(256)(2569)(24568)(568)(2
455668)(2456)(24556689)(245
69)(24567)(245679)(4568)(4)(5
6)(4567)(569)(45679)(6)(69)(4
7)(7)(9)
 \\   
 & = 
(1)(3)(123)(1235)(5)(45)(457)(8)(2
3)(12)(12568)(1256)(12569)(2)(2
568)(256)(2569)(24568)(245
6)(568)(24556689)(24569)(245
67)(245679)(4568)(4)(56)(456
7)(569)(45679)(6)(69)(47)(7)(9)
 \\   
 & = 
(1)(3)(123)(1235)(5)(45)(457)(8)(2
3)(12)(12568)(1256)(12569)(2)(2
568)(256)(2569)(24568)(245
6)(24569)(568)(24567)(24567
9)(4568)(4)(56)(4567)(569)(4567
9)(6)(69)(47)(7)(9)
 \\   
 & = 
(1)(3)(123)(1235)(5)(45)(457)(8)(2
3)(12)(12568)(1256)(12569)(2)(2
568)(256)(2569)(24568)(245
6)(24569)(24567)(568)(24567
9)(4568)(4)(56)(4567)(569)(4567
9)(6)(69)(47)(7)(9)
 \\   
 & = 
(1)(3)(123)(1235)(5)(45)(457)(8)(2
3)(12)(12568)(1256)(12569)(2)(2
568)(256)(2569)(24568)(245
6)(24569)(24567)(245679)(56
8)(4568)(4)(56)(4567)(569)(4567
9)(6)(69)(47)(7)(9)
 \\   
 & = 
(1)(3)(123)(1235)(5)(45)(457)(8)(2
3)(12)(12568)(1256)(12569)(2)(2
568)(256)(2569)(24568)(245
6)(24569)(24567)(245679)(4)(5
68)(56)(4567)(569)(45679)(6)(6
9)(47)(7)(9)
 \\   
 & = 
(1)(3)(123)(1235)(5)(45)(457)(8)(2
3)(12)(12568)(1256)(12569)(2)(2
568)(256)(2569)(24568)(245
6)(24569)(24567)(4)(245679)(5
68)(56)(4567)(569)(45679)(6)(6
9)(47)(7)(9)
 \\   
 & = 
(1)(3)(123)(1235)(5)(45)(457)(8)(2
3)(12)(12568)(1256)(12569)(2)(2
568)(256)(2569)(24568)(245
6)(24569)(4)(24567)(245679)(5
68)(56)(4567)(569)(45679)(6)(6
9)(47)(7)(9)
 \\   
 & = 
(1)(3)(123)(1235)(5)(45)(457)(8)(2
3)(12)(12568)(1256)(12569)(2)(2
568)(256)(24568)(2569)(245
6)(24569)(4)(24567)(245679)(5
68)(56)(4567)(569)(45679)(6)(6
9)(47)(7)(9)
 \\   
 & = 
(1)(3)(123)(1235)(5)(45)(457)(8)(2
3)(12)(12568)(1256)(12569)(2)(2
568)(256)(24568)(2456)(256
9)(24569)(4)(24567)(245679)(5
68)(56)(4567)(569)(45679)(6)(6
9)(47)(7)(9)
 \\   
 & = 
(1)(3)(123)(1235)(5)(45)(457)(8)(2
3)(12)(12568)(1256)(12569)(2)(2
568)(256)(24568)(2456)(4)(256
9)(24567)(245679)(568)(56)(45
67)(569)(45679)(6)(69)(47)(7)(9)
 \\   
 & = 
(1)(3)(123)(1235)(5)(45)(457)(8)(2
3)(12)(12568)(1256)(12569)(2)(2
568)(24568)(256)(2456)(4)(256
9)(24567)(245679)(568)(56)(45
67)(569)(45679)(6)(69)(47)(7)(9)
 \\   
 & = 
(1)(3)(123)(1235)(5)(45)(457)(8)(2
3)(12)(12568)(1256)(12569)(2)(2
568)(24568)(4)(256)(2569)(245
67)(245679)(568)(56)(4567)(56
9)(45679)(6)(69)(47)(7)(9)
 \\   
 & = 
(1)(3)(123)(1235)(5)(45)(457)(8)(2
3)(12)(12568)(1256)(1256
9)(2)(4)(2568)(256)(2569)(2456
7)(245679)(568)(56)(4567)(56
9)(45679)(6)(69)(47)(7)(9)
 \\   
 & = 
(1)(3)(123)(1235)(5)(45)(457)(8)(2
3)(12)(12568)(1256)(2)(1256
9)(4)(2568)(256)(2569)(24567)(2
45679)(568)(56)(4567)(569)(45
679)(6)(69)(47)(7)(9)
 \\   
 & = 
(1)(3)(123)(1235)(5)(45)(457)(8)(2
3)(12)(12568)(2)(1256)(1256
9)(4)(2568)(256)(2569)(24567)(2
45679)(568)(56)(4567)(569)(45
679)(6)(69)(47)(7)(9)
 \\   
 & = 
(1)(3)(123)(1235)(5)(45)(457)(8)(2
3)(12)(2)(12568)(1256)(1256
9)(4)(2568)(256)(2569)(24567)(2
45679)(568)(56)(4567)(569)(45
679)(6)(69)(47)(7)(9)
 \\   
 & = 
(1)(3)(123)(1235)(5)(45)(457)(8)(2
3)(12)(2)(12568)(1256)(4)(1256
9)(2568)(256)(2569)(24567)(24
5679)(568)(56)(4567)(569)(456
79)(6)(69)(47)(7)(9)
 \\   
 & = 
(1)(3)(123)(1235)(5)(45)(457)(8)(2
3)(12)(2)(12568)(4)(1256)(1256
9)(2568)(256)(2569)(24567)(24
5679)(568)(56)(4567)(569)(456
79)(6)(69)(47)(7)(9)
 \\   
 & = 
(1)(3)(123)(1235)(5)(45)(457)(8)(2
3)(12)(2)(4)(12568)(1256)(1256
9)(2568)(256)(2569)(24567)(24
5679)(568)(56)(4567)(569)(456
79)(6)(69)(47)(7)(9)
 \\   
 & = 
(1)(3)(123)(1235)(5)(45)(457)(8)(2
3)(12)(4)(2)(12568)(1256)(1256
9)(2568)(256)(2569)(24567)(24
5679)(568)(56)(4567)(569)(456
79)(6)(69)(47)(7)(9)
 \\   
 & = 
(1)(3)(123)(1235)(5)(45)(457)(8)(2
3)(4)(124)(12)(2)(12568)(1256)(1
2569)(2568)(256)(2569)(2456
7)(245679)(568)(56)(4567)(56
9)(45679)(6)(69)(47)(7)(9)
 \\   
 & = 
(1)(3)(123)(1235)(5)(45)(45
7)(8)(4)(23)(124)(12)(2)(12568)(1
256)(12569)(2568)(256)(256
9)(24567)(245679)(568)(56)(45
67)(569)(45679)(6)(69)(47)(7)(9)
 \\   
 & = 
(1)(3)(123)(1235)(5)(45)(45
7)(4)(8)(23)(124)(12)(2)(12568)(1
256)(12569)(2568)(256)(256
9)(24567)(245679)(568)(56)(45
67)(569)(45679)(6)(69)(47)(7)(9)
 \\   
 & = 
(1)(3)(123)(1235)(5)(45)(4)(45
7)(8)(23)(124)(12)(2)(12568)(125
6)(12569)(2568)(256)(2569)(24
567)(245679)(568)(56)(4567)(5
69)(45679)(6)(69)(47)(7)(9)
 \\   
 & = 
(1)(3)(123)(1235)(4)(5)(457)(8)(2
3)(124)(12)(2)(12568)(1256)(125
69)(2568)(256)(2569)(24567)(2
45679)(568)(56)(4567)(569)(45
679)(6)(69)(47)(7)(9)
 \\   
 & = 
(1)(3)(123)(4)(1235)(5)(457)(8)(2
3)(124)(12)(2)(12568)(1256)(125
69)(2568)(256)(2569)(24567)(2
45679)(568)(56)(4567)(569)(45
679)(6)(69)(47)(7)(9)
 \\   
 & = 
(1)(3)(4)(1234)(123)(1235)(5)(45
7)(8)(23)(124)(12)(2)(12568)(125
6)(12569)(2568)(256)(2569)(24
567)(245679)(568)(56)(4567)(5
69)(45679)(6)(69)(47)(7)(9)
 \\   
 & = 
(1)(4)(3)(1234)(123)(1235)(5)(45
7)(8)(23)(124)(12)(2)(12568)(125
6)(12569)(2568)(256)(2569)(24
567)(245679)(568)(56)(4567)(5
69)(45679)(6)(69)(47)(7)(9)
 \\   
 & = 
(4)(14)(1)(3)(1234)(123)(123
5)(5)(457)(8)(23)(124)(12)(2)(125
68)(1256)(12569)(2568)(256)(2
569)(24567)(245679)(568)(5
6)(4567)(569)(45679)(6)(69)(4
7)(7)(9)
 \\   
 & = 
(4)(14)(1)(3)(1234)(123)(123
5)(5)(457)(8)(124)(23)(12)(2)(125
68)(1256)(12569)(2568)(256)(2
569)(24567)(245679)(568)(5
6)(4567)(569)(45679)(6)(69)(4
7)(7)(9)
 \\   
 & = 
(4)(14)(1)(3)(1234)(123)(123
5)(5)(457)(124)(8)(23)(12)(2)(125
68)(1256)(12569)(2568)(256)(2
569)(24567)(245679)(568)(5
6)(4567)(569)(45679)(6)(69)(4
7)(7)(9)
 \\   
 & = 
(4)(14)(1)(3)(1234)(123)(123
5)(5)(124)(457)(8)(23)(12)(2)(125
68)(1256)(12569)(2568)(256)(2
569)(24567)(245679)(568)(5
6)(4567)(569)(45679)(6)(69)(4
7)(7)(9)
 \\   
 & = 
(4)(14)(1)(3)(1234)(123)(1235)(12
4)(5)(457)(8)(23)(12)(2)(12568)(1
256)(12569)(2568)(256)(256
9)(24567)(245679)(568)(56)(45
67)(569)(45679)(6)(69)(47)(7)(9)
 \\   
 & = 
(4)(14)(1)(3)(1234)(123)(124)(123
5)(5)(457)(8)(23)(12)(2)(12568)(1
256)(12569)(2568)(256)(256
9)(24567)(245679)(568)(56)(45
67)(569)(45679)(6)(69)(47)(7)(9)
 \\   
 & = 
(4)(14)(1)(3)(1234)(124)(123)(123
5)(5)(457)(8)(23)(12)(2)(12568)(1
256)(12569)(2568)(256)(256
9)(24567)(245679)(568)(56)(45
67)(569)(45679)(6)(69)(47)(7)(9)
 \\   
 & = 
(4)(14)(1)(124)(3)(123)(1235)(5)(4
57)(8)(23)(12)(2)(12568)(1256)(1
2569)(2568)(256)(2569)(2456
7)(245679)(568)(56)(4567)(56
9)(45679)(6)(69)(47)(7)(9)
 \\   
 & = 
(4)(14)(1)(124)(3)(123)(1235)(5)(4
57)(8)(12)(23)(2)(12568)(1256)(1
2569)(2568)(256)(2569)(2456
7)(245679)(568)(56)(4567)(56
9)(45679)(6)(69)(47)(7)(9)
 \\   
 & = 
(4)(14)(1)(124)(3)(123)(1235)(5)(4
57)(12)(8)(23)(2)(12568)(1256)(1
2569)(2568)(256)(2569)(2456
7)(245679)(568)(56)(4567)(56
9)(45679)(6)(69)(47)(7)(9)
 \\   
 & = 
(4)(14)(1)(124)(3)(123)(1235)(5)(1
2)(457)(8)(23)(2)(12568)(1256)(1
2569)(2568)(256)(2569)(2456
7)(245679)(568)(56)(4567)(56
9)(45679)(6)(69)(47)(7)(9)
 \\   
 & = 
(4)(14)(1)(124)(3)(123)(1235)(1
2)(125)(5)(457)(8)(23)(2)(1256
8)(1256)(12569)(2568)(256)(25
69)(24567)(245679)(568)(56)(4
567)(569)(45679)(6)(69)(47)(7)(9)
 \\   
 & = 
(4)(14)(1)(124)(3)(123)(12)(123
5)(125)(5)(457)(8)(23)(2)(1256
8)(1256)(12569)(2568)(256)(25
69)(24567)(245679)(568)(56)(4
567)(569)(45679)(6)(69)(47)(7)(9)
 \\   
 & = 
(4)(14)(1)(124)(12)(3)(1235)(12
5)(5)(457)(8)(23)(2)(12568)(125
6)(12569)(2568)(256)(2569)(24
567)(245679)(568)(56)(4567)(5
69)(45679)(6)(69)(47)(7)(9)
 \\   
 & = 
(4)(14)(1)(124)(12)(125)(3)(5)(45
7)(8)(23)(2)(12568)(1256)(1256
9)(2568)(256)(2569)(24567)(24
5679)(568)(56)(4567)(569)(456
79)(6)(69)(47)(7)(9)
 \\   
 & = 
(4)(14)(1)(124)(12)(125)(5)(3)(45
7)(8)(23)(2)(12568)(1256)(1256
9)(2568)(256)(2569)(24567)(24
5679)(568)(56)(4567)(569)(456
79)(6)(69)(47)(7)(9)
 \\   
 & = 
(4)(14)(1)(124)(12)(125)(5)(45
7)(3)(8)(23)(2)(12568)(1256)(125
69)(2568)(256)(2569)(24567)(2
45679)(568)(56)(4567)(569)(45
679)(6)(69)(47)(7)(9)
 \\   
 & = 
(4)(14)(1)(124)(12)(125)(5)(45
7)(8)(3)(23)(2)(12568)(1256)(125
69)(2568)(256)(2569)(24567)(2
45679)(568)(56)(4567)(569)(45
679)(6)(69)(47)(7)(9)
 \\   
 & = 
(4)(14)(1)(124)(12)(125)(5)(45
7)(8)(2)(3)(12568)(1256)(1256
9)(2568)(256)(2569)(24567)(24
5679)(568)(56)(4567)(569)(456
79)(6)(69)(47)(7)(9)
 \\   
 & = 
(4)(14)(1)(124)(12)(125)(5)(45
7)(8)(2)(12568)(3)(1256)(1256
9)(2568)(256)(2569)(24567)(24
5679)(568)(56)(4567)(569)(456
79)(6)(69)(47)(7)(9)
 \\   
 & = 
(4)(14)(1)(124)(12)(125)(5)(45
7)(8)(2)(12568)(1256)(3)(1256
9)(2568)(256)(2569)(24567)(24
5679)(568)(56)(4567)(569)(456
79)(6)(69)(47)(7)(9)
 \\   
 & = 
(4)(14)(1)(124)(12)(125)(5)(45
7)(8)(2)(12568)(1256)(1256
9)(3)(2568)(256)(2569)(24567)(2
45679)(568)(56)(4567)(569)(45
679)(6)(69)(47)(7)(9)
 \\   
 & = 
(4)(14)(1)(124)(12)(125)(5)(45
7)(8)(2)(12568)(1256)(12569)(25
68)(3)(256)(2569)(24567)(2456
79)(568)(56)(4567)(569)(4567
9)(6)(69)(47)(7)(9)
 \\   
 & = 
(4)(14)(1)(124)(12)(125)(5)(45
7)(8)(2)(12568)(1256)(12569)(25
68)(256)(3)(2569)(24567)(2456
79)(568)(56)(4567)(569)(4567
9)(6)(69)(47)(7)(9)
 \\   
 & = 
(4)(14)(1)(124)(12)(125)(5)(45
7)(8)(2)(12568)(1256)(12569)(25
68)(256)(2569)(3)(24567)(2456
79)(568)(56)(4567)(569)(4567
9)(6)(69)(47)(7)(9)
 \\   
 & = 
(4)(14)(1)(124)(12)(125)(5)(45
7)(8)(2)(12568)(1256)(12569)(25
68)(256)(2569)(24567)(3)(2456
79)(568)(56)(4567)(569)(4567
9)(6)(69)(47)(7)(9)
 \\   
 & = 
(4)(14)(1)(124)(12)(125)(5)(45
7)(8)(2)(12568)(1256)(12569)(25
68)(256)(2569)(24567)(24567
9)(3)(568)(56)(4567)(569)(4567
9)(6)(69)(47)(7)(9)
 \\   
 & = 
(4)(14)(1)(124)(12)(125)(5)(45
7)(8)(2)(12568)(1256)(12569)(25
68)(256)(2569)(24567)(24567
9)(568)(3568)(3)(56)(4567)(56
9)(45679)(6)(69)(47)(7)(9)
 \\   
 & = 
(4)(14)(1)(124)(12)(125)(5)(45
7)(8)(2)(12568)(1256)(12569)(25
68)(256)(2569)(24567)(24567
9)(568)(3568)(56)(356)(3)(456
7)(569)(45679)(6)(69)(47)(7)(9)
 \\   
 & = 
(4)(14)(1)(124)(12)(125)(5)(45
7)(8)(2)(12568)(1256)(12569)(25
68)(256)(2569)(24567)(24567
9)(568)(3568)(56)(356)(4567)(34
567)(3)(569)(45679)(6)(69)(4
7)(7)(9)
 \\   
 & = 
(4)(14)(1)(124)(12)(125)(5)(45
7)(8)(2)(12568)(1256)(12569)(25
68)(256)(2569)(24567)(24567
9)(568)(3568)(56)(356)(4567)(34
567)(569)(3569)(3)(45679)(6)(6
9)(47)(7)(9)
 \\   
 & = 
(4)(14)(1)(124)(12)(125)(5)(45
7)(8)(2)(12568)(1256)(12569)(25
68)(256)(2569)(24567)(24567
9)(568)(3568)(56)(356)(4567)(34
567)(569)(3569)(45679)(34567
9)(3)(6)(69)(47)(7)(9)
 \\   
 & = 
(4)(14)(1)(124)(12)(125)(5)(45
7)(8)(2)(12568)(1256)(12569)(25
68)(256)(2569)(24567)(24567
9)(568)(3568)(56)(356)(4567)(34
567)(569)(3569)(45679)(34567
9)(6)(36)(3)(69)(47)(7)(9)
 \\   
 & = 
(4)(14)(1)(124)(12)(125)(5)(45
7)(8)(2)(12568)(1256)(12569)(25
68)(256)(2569)(24567)(24567
9)(568)(3568)(56)(356)(4567)(34
567)(569)(3569)(45679)(34567
9)(6)(36)(69)(369)(3)(47)(7)(9)
 \\   
 & = 
(4)(14)(1)(124)(12)(125)(5)(45
7)(8)(2)(12568)(1256)(12569)(25
68)(256)(2569)(24567)(24567
9)(568)(3568)(56)(356)(4567)(34
567)(569)(3569)(45679)(34567
9)(6)(36)(69)(369)(47)(3)(7)(9)
 \\   
 & = 
(4)(14)(1)(124)(12)(125)(5)(45
7)(8)(2)(12568)(1256)(12569)(25
68)(256)(2569)(24567)(24567
9)(568)(3568)(56)(356)(4567)(34
567)(569)(3569)(45679)(34567
9)(6)(36)(69)(47)(369)(3)(7)(9)
 \\   
 & = 
(4)(14)(1)(124)(12)(125)(5)(45
7)(8)(2)(12568)(1256)(12569)(25
68)(256)(2569)(24567)(24567
9)(568)(3568)(56)(356)(4567)(34
567)(569)(3569)(45679)(34567
9)(6)(36)(47)(69)(369)(3)(7)(9)
 \\   
 & = 
(4)(14)(1)(124)(12)(125)(5)(45
7)(8)(2)(12568)(1256)(12569)(25
68)(256)(2569)(24567)(24567
9)(568)(3568)(56)(356)(4567)(34
567)(569)(3569)(45679)(34567
9)(6)(47)(36)(69)(369)(3)(7)(9)
 \\   
 & = 
(4)(14)(1)(124)(12)(125)(5)(45
7)(8)(2)(12568)(1256)(12569)(25
68)(256)(2569)(24567)(24567
9)(568)(3568)(56)(356)(4567)(34
567)(569)(3569)(45679)(34567
9)(47)(6)(36)(69)(369)(3)(7)(9)
 \\   
 & = 
(4)(14)(1)(124)(12)(125)(5)(45
7)(8)(2)(12568)(1256)(12569)(25
68)(256)(2569)(24567)(24567
9)(568)(3568)(56)(356)(4567)(34
567)(569)(45679)(3569)(34567
9)(47)(6)(36)(69)(369)(3)(7)(9)
 \\   
 & = 
(4)(14)(1)(124)(12)(125)(5)(45
7)(8)(2)(12568)(1256)(12569)(25
68)(256)(2569)(24567)(24567
9)(568)(3568)(56)(356)(4567)(34
567)(569)(45679)(47)(356
9)(6)(36)(69)(369)(3)(7)(9)
 \\   
 & = 
(4)(14)(1)(124)(12)(125)(5)(45
7)(8)(2)(12568)(1256)(12569)(25
68)(256)(2569)(24567)(24567
9)(568)(3568)(56)(356)(4567)(34
567)(47)(569)(3569)(6)(36)(69)(3
69)(3)(7)(9)
 \\   
 & = 
(4)(14)(1)(124)(12)(125)(5)(45
7)(8)(2)(12568)(1256)(12569)(25
68)(256)(2569)(24567)(24567
9)(568)(3568)(56)(4567)(356)(34
567)(47)(569)(3569)(6)(36)(69)(3
69)(3)(7)(9)
 \\   
 & = 
(4)(14)(1)(124)(12)(125)(5)(45
7)(8)(2)(12568)(1256)(12569)(25
68)(256)(2569)(24567)(24567
9)(568)(3568)(56)(4567)(47)(35
6)(569)(3569)(6)(36)(69)(36
9)(3)(7)(9)
 \\   
 & = 
(4)(14)(1)(124)(12)(125)(5)(45
7)(8)(2)(12568)(1256)(12569)(25
68)(256)(2569)(24567)(24567
9)(568)(3568)(47)(56)(356)(56
9)(3569)(6)(36)(69)(369)(3)(7)(9)
 \\   
 & = 
(4)(14)(1)(124)(12)(125)(5)(45
7)(8)(2)(12568)(1256)(12569)(25
68)(256)(2569)(24567)(24567
9)(568)(47)(3568)(56)(356)(56
9)(3569)(6)(36)(69)(369)(3)(7)(9)
 \\   
 & = 
(4)(14)(1)(124)(12)(125)(5)(45
7)(8)(2)(12568)(1256)(12569)(25
68)(256)(2569)(24567)(24567
9)(47)(568)(3568)(56)(356)(56
9)(3569)(6)(36)(69)(369)(3)(7)(9)
 \\   
 & = 
(4)(14)(1)(124)(12)(125)(5)(45
7)(8)(2)(12568)(1256)(12569)(25
68)(256)(24567)(2569)(24567
9)(47)(568)(3568)(56)(356)(56
9)(3569)(6)(36)(69)(369)(3)(7)(9)
 \\   
 & = 
(4)(14)(1)(124)(12)(125)(5)(45
7)(8)(2)(12568)(1256)(12569)(25
68)(256)(24567)(47)(2569)(56
8)(3568)(56)(356)(569)(356
9)(6)(36)(69)(369)(3)(7)(9)
 \\   
 & = 
(4)(14)(1)(124)(12)(125)(5)(45
7)(8)(2)(12568)(1256)(12569)(25
68)(47)(256)(2569)(568)(356
8)(56)(356)(569)(3569)(6)(36)(6
9)(369)(3)(7)(9)
 \\   
 & = 
(4)(14)(1)(124)(12)(125)(5)(45
7)(8)(2)(12568)(1256)(12569)(4
7)(2568)(256)(2569)(568)(356
8)(56)(356)(569)(3569)(6)(36)(6
9)(369)(3)(7)(9)
 \\   
 & = 
(4)(14)(1)(124)(12)(125)(5)(45
7)(8)(2)(12568)(1256)(47)(1256
9)(2568)(256)(2569)(568)(356
8)(56)(356)(569)(3569)(6)(36)(6
9)(369)(3)(7)(9)
 \\   
 & = 
(4)(14)(1)(124)(12)(125)(5)(45
7)(8)(2)(12568)(47)(1256)(1256
9)(2568)(256)(2569)(568)(356
8)(56)(356)(569)(3569)(6)(36)(6
9)(369)(3)(7)(9)
 \\   
 & = 
(4)(14)(1)(124)(12)(125)(5)(45
7)(8)(2)(47)(1245678)(12568)(12
56)(12569)(2568)(256)(2569)(5
68)(3568)(56)(356)(569)(356
9)(6)(36)(69)(369)(3)(7)(9)
 \\   
 & = 
(4)(14)(1)(124)(12)(125)(5)(45
7)(8)(47)(2)(1245678)(12568)(12
56)(12569)(2568)(256)(2569)(5
68)(3568)(56)(356)(569)(356
9)(6)(36)(69)(369)(3)(7)(9)
 \\   
 & = 
(4)(14)(1)(124)(12)(125)(5)(457)(4
7)(478)(8)(2)(1245678)(12568)(1
256)(12569)(2568)(256)(256
9)(568)(3568)(56)(356)(569)(356
9)(6)(36)(69)(369)(3)(7)(9)
 \\   
 & = 
(4)(14)(1)(124)(12)(125)(47)(5)(47
8)(8)(2)(1245678)(12568)(125
6)(12569)(2568)(256)(2569)(56
8)(3568)(56)(356)(569)(356
9)(6)(36)(69)(369)(3)(7)(9)
 \\   
 & = 
(4)(14)(1)(124)(12)(125)(47)(47
8)(5)(8)(2)(1245678)(12568)(125
6)(12569)(2568)(256)(2569)(56
8)(3568)(56)(356)(569)(356
9)(6)(36)(69)(369)(3)(7)(9)
 \\   
 & = 
(4)(14)(1)(124)(12)(125)(47)(47
8)(8)(58)(5)(2)(1245678)(1256
8)(1256)(12569)(2568)(256)(25
69)(568)(3568)(56)(356)(569)(35
69)(6)(36)(69)(369)(3)(7)(9)
 \\   
 & = 
(4)(14)(1)(124)(12)(125)(47)(47
8)(8)(58)(2)(25)(5)(1245678)(125
68)(1256)(12569)(2568)(256)(2
569)(568)(3568)(56)(356)(569)(3
569)(6)(36)(69)(369)(3)(7)(9)
 \\   
 & = 
(4)(14)(1)(124)(12)(125)(47)(47
8)(8)(58)(2)(25)(1245678)(5)(125
68)(1256)(12569)(2568)(256)(2
569)(568)(3568)(56)(356)(569)(3
569)(6)(36)(69)(369)(3)(7)(9)
 \\   
 & = 
(4)(14)(1)(124)(12)(125)(47)(47
8)(8)(58)(2)(25)(1245678)(1256
8)(125568)(5)(1256)(12569)(25
68)(256)(2569)(568)(3568)(56)(3
56)(569)(3569)(6)(36)(69)(36
9)(3)(7)(9)
 \\   
 & = 
(4)(14)(1)(124)(12)(125)(47)(47
8)(8)(58)(2)(25)(1245678)(1256
8)(125568)(1256)(5)(12569)(25
68)(256)(2569)(568)(3568)(56)(3
56)(569)(3569)(6)(36)(69)(36
9)(3)(7)(9)
 \\   
 & = 
(4)(14)(1)(124)(12)(125)(47)(47
8)(8)(58)(2)(25)(1245678)(1256
8)(125568)(1256)(12569)(5)(25
68)(256)(2569)(568)(3568)(56)(3
56)(569)(3569)(6)(36)(69)(36
9)(3)(7)(9)
 \\   
 & = 
(4)(14)(1)(124)(12)(125)(47)(47
8)(8)(58)(2)(25)(1245678)(1256
8)(125568)(1256)(12569)(256
8)(25568)(5)(256)(2569)(568)(35
68)(56)(356)(569)(3569)(6)(36)(6
9)(369)(3)(7)(9)
 \\   
 & = 
(4)(14)(1)(124)(12)(125)(47)(47
8)(8)(58)(2)(25)(1245678)(1256
8)(125568)(1256)(12569)(256
8)(25568)(256)(5)(2569)(568)(35
68)(56)(356)(569)(3569)(6)(36)(6
9)(369)(3)(7)(9)
 \\   
 & = 
(4)(14)(1)(124)(12)(125)(47)(47
8)(8)(58)(2)(25)(1245678)(1256
8)(125568)(1256)(12569)(256
8)(25568)(256)(2569)(5)(568)(35
68)(56)(356)(569)(3569)(6)(36)(6
9)(369)(3)(7)(9)
 \\   
 & = 
(4)(14)(1)(124)(12)(125)(47)(47
8)(8)(58)(2)(25)(1245678)(1256
8)(125568)(1256)(12569)(256
8)(25568)(256)(2569)(568)(5)(35
68)(56)(356)(569)(3569)(6)(36)(6
9)(369)(3)(7)(9)
 \\   
 & = 
(4)(14)(1)(124)(12)(125)(47)(47
8)(8)(58)(2)(25)(1245678)(1256
8)(125568)(1256)(12569)(256
8)(25568)(256)(2569)(568)(356
8)(5)(56)(356)(569)(3569)(6)(3
6)(69)(369)(3)(7)(9)
 \\   
 & = 
(4)(14)(1)(124)(12)(125)(47)(47
8)(8)(58)(2)(25)(1245678)(1256
8)(125568)(1256)(12569)(256
8)(25568)(256)(2569)(568)(356
8)(5)(56)(356)(569)(6)(35669)(35
69)(36)(69)(369)(3)(7)(9)
 \\   
 & = 
(4)(14)(1)(124)(12)(125)(47)(47
8)(8)(58)(2)(25)(1245678)(1256
8)(125568)(1256)(12569)(256
8)(25568)(256)(2569)(568)(356
8)(5)(56)(356)(6)(569)(35669)(35
69)(36)(69)(369)(3)(7)(9)
 \\   
 & = 
(4)(14)(1)(124)(12)(125)(47)(47
8)(8)(58)(2)(25)(1245678)(1256
8)(125568)(1256)(12569)(256
8)(25568)(256)(2569)(568)(356
8)(5)(56)(6)(356)(569)(35669)(35
69)(36)(69)(369)(3)(7)(9)
 \\   
 & = 
(4)(14)(1)(124)(12)(125)(47)(47
8)(8)(58)(2)(25)(1245678)(1256
8)(125568)(1256)(12569)(256
8)(25568)(256)(2569)(568)(356
8)(5)(56)(6)(356)(569)(35669)(3
6)(3569)(69)(369)(3)(7)(9)
 \\   
 & = 
(4)(14)(124)(1)(12)(125)(47)(47
8)(8)(58)(2)(25)(1245678)(1256
8)(125568)(1256)(12569)(256
8)(25568)(256)(2569)(568)(356
8)(5)(56)(6)(356)(569)(35669)(3
6)(3569)(69)(369)(3)(7)(9)
 \\   
 & = 
(4)(14)(124)(1)(12)(125)(47)(478)(8)(2)(258)(58)(25)(1245678)(1
2568)(125568)(1256)(12569)(2568)(25568)(256)(2569)(568)(3
568)(5)(56)(6)(356)(569)(35669)(36)(3569)(69)(369)(3)(7)(9)
 \\   
 & = 
(4)(14)(124)(1)(12)(125)(47)(478)(2)(8)(258)(58)(25)(1245678)(1
2568)(125568)(1256)(12569)(2568)(25568)(256)(2569)(568)(3
568)(5)(56)(6)(356)(569)(35669)(36)(3569)(69)(369)(3)(7)(9)
 \\   
 & = 
(4)(14)(124)(1)(12)(125)(47)(2)(478)(8)(258)(58)(25)(1245678)(125
68)(125568)(1256)(12569)(2568)(25568)(256)(2569)(568)(356
8)(5)(56)(6)(356)(569)(35669)(36)(3569)(69)(369)(3)(7)(9)
 \\   
 & = 
(4)(14)(124)(1)(12)(125)(2)(47)(478)(8)(258)(58)(25)(1245678)(125
68)(125568)(1256)(12569)(2568)(25568)(256)(2569)(568)(356
8)(5)(56)(6)(356)(569)(35669)(36)(3569)(69)(369)(3)(7)(9)
 \\   
 & = 
(4)(14)(124)(1)(12)(2)(125)(47)(47
8)(8)(258)(58)(25)(1245678)(125
68)(125568)(1256)(12569)(256
8)(25568)(256)(2569)(568)(356
8)(5)(56)(6)(356)(569)(35669)(3
6)(3569)(69)(369)(3)(7)(9)
 \\   
 & = 
(4)(14)(124)(2)(1)(125)(47)(47
8)(8)(258)(58)(25)(1245678)(125
68)(125568)(1256)(12569)(256
8)(25568)(256)(2569)(568)(356
8)(5)(56)(6)(356)(569)(35669)(3
6)(3569)(69)(369)(3)(7)(9)
 \\   
 & = 
(4)(14)(124)(2)(1)(125)(47)(47
8)(8)(258)(25)(58)(1245678)(125
68)(125568)(1256)(12569)(256
8)(25568)(256)(2569)(568)(356
8)(5)(56)(6)(356)(569)(35669)(3
6)(3569)(69)(369)(3)(7)(9)
 \\   
 & = 
(4)(14)(124)(2)(1)(125)(47)(478)(2
5)(8)(58)(1245678)(12568)(125
568)(1256)(12569)(2568)(2556
8)(256)(2569)(568)(3568)(5)(5
6)(6)(356)(569)(35669)(36)(356
9)(69)(369)(3)(7)(9)
 \\   
 & = 
(4)(14)(124)(2)(1)(125)(47)(25)(47
8)(8)(58)(1245678)(12568)(125
568)(1256)(12569)(2568)(2556
8)(256)(2569)(568)(3568)(5)(5
6)(6)(356)(569)(35669)(36)(356
9)(69)(369)(3)(7)(9)
 \\   
 & = 
(4)(14)(124)(2)(1)(125)(25)(245
7)(47)(478)(8)(58)(1245678)(125
68)(125568)(1256)(12569)(256
8)(25568)(256)(2569)(568)(356
8)(5)(56)(6)(356)(569)(35669)(3
6)(3569)(69)(369)(3)(7)(9)
 \\   
 & = 
(4)(14)(124)(2)(25)(1)(2457)(47)(4
78)(8)(58)(1245678)(12568)(12
5568)(1256)(12569)(2568)(255
68)(256)(2569)(568)(3568)(5)(5
6)(6)(356)(569)(35669)(36)(356
9)(69)(369)(3)(7)(9)
 \\   
 & = 
(4)(14)(124)(2)(25)(2457)(1)(47)(4
78)(8)(58)(1245678)(12568)(12
5568)(1256)(12569)(2568)(255
68)(256)(2569)(568)(3568)(5)(5
6)(6)(356)(569)(35669)(36)(356
9)(69)(369)(3)(7)(9)
 \\   
 & = 
(4)(14)(124)(2)(25)(2457)(47)(14
7)(1)(478)(8)(58)(1245678)(1256
8)(125568)(1256)(12569)(256
8)(25568)(256)(2569)(568)(356
8)(5)(56)(6)(356)(569)(35669)(3
6)(3569)(69)(369)(3)(7)(9)
 \\   
 & = 
(4)(14)(124)(2)(25)(2457)(47)(14
7)(478)(1478)(1)(8)(58)(124567
8)(12568)(125568)(1256)(1256
9)(2568)(25568)(256)(2569)(56
8)(3568)(5)(56)(6)(356)(569)(356
69)(36)(3569)(69)(369)(3)(7)(9)
 \\   
 & = 
(4)(14)(124)(2)(25)(2457)(47)(14
7)(478)(1478)(8)(1)(58)(124567
8)(12568)(125568)(1256)(1256
9)(2568)(25568)(256)(2569)(56
8)(3568)(5)(56)(6)(356)(569)(356
69)(36)(3569)(69)(369)(3)(7)(9)
 \\   
 & = 
(4)(14)(124)(2)(25)(2457)(47)(14
7)(478)(1478)(8)(58)(1)(124567
8)(12568)(125568)(1256)(1256
9)(2568)(25568)(256)(2569)(56
8)(3568)(5)(56)(6)(356)(569)(356
69)(36)(3569)(69)(369)(3)(7)(9)
 \\   
 & = 
(4)(14)(124)(2)(25)(2457)(47)(14
7)(478)(1478)(8)(58)(124567
8)(1)(12568)(125568)(1256)(12
569)(2568)(25568)(256)(256
9)(568)(3568)(5)(56)(6)(356)(56
9)(35669)(36)(3569)(69)(36
9)(3)(7)(9)
 \\   
 & = 
(4)(14)(124)(2)(25)(2457)(47)(14
7)(478)(1478)(8)(58)(124567
8)(1)(12568)(125568)(1256)(25
68)(12569)(25568)(256)(256
9)(568)(3568)(5)(56)(6)(356)(56
9)(35669)(36)(3569)(69)(36
9)(3)(7)(9)
 \\   
 & = 
(4)(14)(124)(2)(25)(2457)(47)(14
7)(478)(1478)(8)(58)(124567
8)(1)(12568)(125568)(2568)(12
56)(12569)(25568)(256)(256
9)(568)(3568)(5)(56)(6)(356)(56
9)(35669)(36)(3569)(69)(36
9)(3)(7)(9)
 \\   
 & = 
(4)(14)(124)(2)(25)(2457)(47)(14
7)(478)(1478)(8)(58)(124567
8)(1)(12568)(2568)(125568)(12
56)(12569)(25568)(256)(256
9)(568)(3568)(5)(56)(6)(356)(56
9)(35669)(36)(3569)(69)(36
9)(3)(7)(9)
 \\  
 & = 
(4)(14)(124)(2)(25)(2457)(47)(14
7)(478)(1478)(8)(58)(1245678)(2
568)(1)(125568)(1256)(1256
9)(25568)(256)(2569)(568)(356
8)(5)(56)(6)(356)(569)(35669)(3
6)(3569)(69)(369)(3)(7)(9)
 \\   
 & = 
(4)(14)(124)(2)(25)(2457)(47)(14
7)(478)(1478)(8)(58)(1245678)(2
568)(1)(125568)(1256)(2556
8)(12569)(256)(2569)(568)(356
8)(5)(56)(6)(356)(569)(35669)(3
6)(3569)(69)(369)(3)(7)(9)
 \\   
 & = 
(4)(14)(124)(2)(25)(2457)(47)(14
7)(478)(1478)(8)(58)(1245678)(2
568)(1)(125568)(25568)(125
6)(12569)(256)(2569)(568)(356
8)(5)(56)(6)(356)(569)(35669)(3
6)(3569)(69)(369)(3)(7)(9)
 \\  
 & = 
(4)(14)(124)(2)(25)(2457)(47)(14
7)(478)(1478)(8)(58)(1245678)(2
568)(25568)(1)(1256)(12569)(2
56)(2569)(568)(3568)(5)(56)(6)(3
56)(569)(35669)(36)(3569)(69)(3
69)(3)(7)(9)
 \\   
 & = 
(4)(14)(124)(2)(25)(2457)(47)(14
7)(478)(1478)(8)(58)(1245678)(2
568)(25568)(1)(1256)(256)(125
69)(2569)(568)(3568)(5)(56)(6)(3
56)(569)(35669)(36)(3569)(69)(3
69)(3)(7)(9)
 \\   
 & = 
(4)(14)(124)(2)(25)(2457)(47)(14
7)(478)(1478)(8)(58)(1245678)(2
568)(25568)(256)(1)(12569)(25
69)(568)(3568)(5)(56)(6)(356)(56
9)(35669)(36)(3569)(69)(36
9)(3)(7)(9)
 \\   
 & = 
(4)(14)(124)(2)(25)(2457)(47)(14
7)(478)(1478)(8)(58)(1245678)(2
568)(25568)(256)(2569)(1)(56
8)(3568)(5)(56)(6)(356)(569)(356
69)(36)(3569)(69)(369)(3)(7)(9)
 \\   
 & = 
(4)(14)(124)(2)(25)(2457)(47)(14
7)(478)(1478)(8)(58)(1245678)(2
568)(25568)(256)(2569)(56
8)(1)(3568)(5)(56)(6)(356)(569)(3
5669)(36)(3569)(69)(369)(3)(7)(9)
 \\   
 & = 
(4)(14)(124)(2)(25)(2457)(47)(14
7)(478)(1478)(8)(58)(1245678)(2
568)(25568)(256)(2569)(568)(3
568)(1)(5)(56)(6)(356)(569)(3566
9)(36)(3569)(69)(369)(3)(7)(9)
 \\   
 & = 
(4)(14)(124)(2)(25)(2457)(47)(14
7)(478)(1478)(8)(58)(1245678)(2
568)(25568)(256)(2569)(568)(3
568)(1)(6)(5)(356)(569)(35669)(3
6)(3569)(69)(369)(3)(7)(9)
 \\   
 & = 
(4)(14)(124)(2)(25)(2457)(47)(14
7)(478)(1478)(8)(58)(1245678)(2
568)(25568)(256)(2569)(568)(3
568)(1)(6)(5)(356)(36)(569)(356
9)(69)(369)(3)(7)(9)
 \\   
 & = 
(4)(14)(124)(2)(25)(2457)(47)(14
7)(478)(1478)(8)(58)(1245678)(2
568)(25568)(256)(2569)(568)(3
568)(1)(6)(36)(5)(569)(3569)(6
9)(369)(3)(7)(9)
 \\  
 & = 
(4)(14)(124)(2)(25)(2457)(47)(14
7)(478)(1478)(8)(58)(1245678)(2
568)(25568)(256)(2569)(568)(3
568)(1)(6)(36)(5)(569)(69)(356
9)(369)(3)(7)(9)
 \\   
 & = 
(4)(14)(124)(2)(25)(2457)(47)(14
7)(478)(1478)(8)(58)(1245678)(2
568)(25568)(256)(2569)(568)(3
568)(1)(6)(36)(69)(5)(3569)(36
9)(3)(7)(9)
 \\   
 & = 
(4)(14)(124)(2)(25)(2457)(47)(14
7)(478)(1478)(8)(58)(1245678)(2
568)(25568)(256)(2569)(568)(3
568)(1)(6)(36)(69)(369)(5)(3)(7)(9)
 \\   
 & = 
(4)(14)(124)(2)(25)(2457)(47)(14
7)(478)(1478)(8)(58)(1245678)(2
568)(25568)(256)(2569)(568)(3
568)(1)(6)(36)(69)(369)(3)(5)(7)(9)
 \\   
 & = 
(4)(2)(14)(25)(2457)(47)(147)(47
8)(1478)(8)(58)(1245678)(256
8)(25568)(256)(2569)(568)(356
8)(1)(6)(36)(69)(369)(3)(5)(7)(9)
 \\   
 & = 
(2)(4)(14)(25)(2457)(47)(147)(47
8)(1478)(8)(58)(1245678)(256
8)(25568)(256)(2569)(568)(356
8)(1)(6)(36)(69)(369)(3)(5)(7)(9)
 \\   
 & = 
(2)(4)(14)(25)(2457)(47)(147)(47
8)(1478)(8)(58)(1245678)(256
8)(25568)(256)(2569)(568)(356
8)(6)(1)(36)(69)(369)(3)(5)(7)(9)
 \\   
 & = 
(2)(4)(14)(25)(2457)(47)(147)(47
8)(1478)(8)(58)(1245678)(256
8)(25568)(256)(2569)(568)(356
8)(6)(36)(1)(69)(369)(3)(5)(7)(9)
 \\  
 & = 
(2)(4)(14)(25)(2457)(47)(147)(47
8)(1478)(8)(58)(1245678)(256
8)(25568)(256)(2569)(568)(356
8)(6)(36)(69)(1)(369)(3)(5)(7)(9)
 \\   
 & = 
(2)(4)(14)(25)(2457)(47)(147)(47
8)(1478)(8)(58)(1245678)(256
8)(25568)(256)(2569)(568)(356
8)(6)(36)(69)(369)(1)(3)(5)(7)(9)
 \\   
 & = 
(2)(4)(14)(25)(2457)(47)(147)(47
8)(1478)(8)(58)(1245678)(256
8)(25568)(256)(2569)(568)(6)(35
68)(36)(69)(369)(1)(3)(5)(7)(9)
 \\   
 & = 
(2)(4)(14)(25)(2457)(47)(147)(47
8)(1478)(8)(1245678)(58)(256
8)(25568)(256)(2569)(568)(6)(35
68)(36)(69)(369)(1)(3)(5)(7)(9)
 \\   
 & = 
(2)(4)(14)(25)(2457)(47)(147)(47
8)(1478)(8)(1245678)(2568)(5
8)(25568)(256)(2569)(568)(6)(35
68)(36)(69)(369)(1)(3)(5)(7)(9)
 \\   
 & = 
(2)(4)(14)(25)(2457)(47)(147)(47
8)(1478)(8)(1245678)(2568)(25
6)(58)(2569)(568)(6)(3568)(36)(6
9)(369)(1)(3)(5)(7)(9)
 \\   
 & = 
(2)(4)(14)(25)(2457)(47)(147)(47
8)(1478)(8)(1245678)(2568)(25
6)(2569)(58)(568)(6)(3568)(36)(6
9)(369)(1)(3)(5)(7)(9)
 \\   
 & = 
(2)(4)(14)(25)(2457)(47)(147)(47
8)(1478)(8)(1245678)(2568)(25
6)(2569)(6)(58)(3568)(36)(69)(36
9)(1)(3)(5)(7)(9)
 \\   
 & = 
(2)(4)(14)(25)(2457)(47)(147)(47
8)(1478)(8)(1245678)(2568)(25
6)(6)(2569)(58)(3568)(36)(69)(36
9)(1)(3)(5)(7)(9)
 \\   
 & = 
(2)(4)(14)(47)(25)(147)(478)(147
8)(8)(1245678)(2568)(256)(6)(25
69)(58)(3568)(36)(69)(36
9)(1)(3)(5)(7)(9)
 \\   
 & = 
(2)(4)(14)(47)(147)(25)(478)(147
8)(8)(1245678)(2568)(256)(6)(25
69)(58)(3568)(36)(69)(36
9)(1)(3)(5)(7)(9)
 \\   
 & = 
(2)(4)(14)(47)(147)(478)(25)(1478)(8)(1245678)(2568)(256)(6)(25
69)(58)(3568)(36)(69)(36
9)(1)(3)(5)(7)(9)
 \\   
 & = 
(2)(4)(14)(47)(147)(478)(1478)(124578)(25)(8)(1245678)(2568)(256)(6)(2569)(58)(3568)(36)(69)(369)(1)(3)(5)(7)(9)
 \\   
 & = 
(2)(4)(14)(47)(147)(478)(1478)(124578)(8)(258)(25)(1245678)(2568)(256)(6)(2569)(58)(3568)(36)(69)(369)(1)(3)(5)(7)(9)
 \\   
 & = 
(2)(4)(14)(47)(147)(478)(1478)(124578)(8)(258)(1245678)(25)(2568)(256)(6)(2569)(58)(3568)(36)(69)(369)(1)(3)(5)(7)(9)
 \\   
 & = 
(2)(4)(14)(47)(147)(478)(1478)(124578)(8)(258)(1245678)(2568)(25)(256)(6)(2569)(58)(3568)(36)(69)(369)(1)(3)(5)(7)(9)
 \\   
 & = 
(2)(4)(14)(47)(147)(478)(1478)(124578)(8)(258)(1245678)(2568)(6)(25)(2569)(58)(3568)(36)(69)(369)(1)(3)(5)(7)(9)
 \\   
 & = 
(2)(4)(14)(47)(147)(478)(1478)(124578)(8)(1245678)(258)(2568)(6)(25)(2569)(58)(3568)(36)(69)(369)(1)(3)(5)(7)(9)
 \\   
 & = 
(2)(4)(14)(47)(147)(478)(1478)(124578)(8)(1245678)(6)(258)(25)(2569)(58)(3568)(36)(69)(369)(1)(3)(5)(7)(9)
 \\   
 & = 
(2)(4)(14)(47)(147)(478)(1478)(8)(124578)(1245678)(6)(258)(25)(2569)(58)(3568)(36)(69)(369)(1)(3)(5)(7)(9)
 \\   
 & = 
(2)(4)(14)(47)(147)(478)(1478)(8)(6)(124578)(258)(25)(2569)(58)(3568)(36)(69)(369)(1)(3)(5)(7)(9)
 \\   
 & = 
(2)(4)(14)(47)(147)(478)(1478)(6)(8)(124578)(258)(25)(2569)(58)(3568)(36)(69)(369)(1)(3)(5)(7)(9)
 \\   
 & = 
(2)(4)(14)(47)(147)(478)(6)(1478)(8)(124578)(258)(25)(2569)(58)(3568)(36)(69)(369)(1)(3)(5)(7)(9)
 \\   
 & = 
(2)(4)(14)(47)(147)(6)(478)(1478)(8)(124578)(258)(25)(2569)(58)(3568)(36)(69)(369)(1)(3)(5)(7)(9)
 \\   
 & = 
(2)(4)(14)(47)(6)(147)(478)(1478)(8)(124578)(258)(25)(2569)(58)(3568)(36)(69)(369)(1)(3)(5)(7)(9)
 \\   
 & = 
(2)(4)(14)(6)(47)(147)(478)(1478)(8)(124578)(258)(25)(2569)(58)(3568)(36)(69)(369)(1)(3)(5)(7)(9)
 \\   
 & = 
(2)(4)(6)(14)(47)(147)(478)(1478)(8)(124578)(258)(25)(2569)(58)(3568)(36)(69)(369)(1)(3)(5)(7)(9)
 \\   
 & = 
(2)(4)(6)(14)(47)(478)(147)(1478)(8)(124578)(258)(25)(2569)(58)(3568)(36)(69)(369)(1)(3)(5)(7)(9)
 \\   
 & = 
(2)(4)(6)(14)(47)(478)(8)(147)(124578)(258)(25)(2569)(58)(3568)(36)(69)(369)(1)(3)(5)(7)(9)
 \\   
 & = 
(2)(4)(6)(14)(8)(47)(147)(124578)(258)(25)(2569)(58)(3568)(36)(69)(369)(1)(3)(5)(7)(9)
 \\   
 & = 
(2)(4)(6)(8)(14)(47)(147)(124578)(258)(25)(2569)(58)(3568)(36)(69)(369)(1)(3)(5)(7)(9)
 \\   
 & = 
(2)(4)(6)(8)(14)(47)(147)(124578)(258)(25)(2569)(36)(58)(69)(369)(1)(3)(5)(7)(9)
 \\   
 & = 
(2)(4)(6)(8)(14)(47)(147)(124578)(258)(25)(36)(2569)(58)(69)(369)(1)(3)(5)(7)(9)
 \\   
 & = 
(2)(4)(6)(8)(14)(47)(147)(124578)(258)(36)(25)(2569)(58)(69)(369)(1)(3)(5)(7)(9)
 \\   
 & = 
(2)(4)(6)(8)(14)(47)(147)(124578)(36)(258)(25)(2569)(58)(69)(369)(1)(3)(5)(7)(9)
 \\   
 & = 
(2)(4)(6)(8)(14)(47)(147)(36)(124578)(258)(25)(2569)(58)(69)(369)(1)(3)(5)(7)(9)
 \\   
 & = 
(2)(4)(6)(8)(14)(47)(36)(147)(124578)(258)(25)(2569)(58)(69)(369)(1)(3)(5)(7)(9)
 \\   
 & = 
(2)(4)(6)(8)(14)(36)(47)(147)(124578)(258)(25)(2569)(58)(69)(369)(1)(3)(5)(7)(9)
 \\   
 & = 
(2)(4)(6)(8)(14)(36)(47)(258)(147)(25)(2569)(58)(69)(369)(1)(3)(5)(7)(9)
 \\   
 & = 
(2)(4)(6)(8)(14)(36)(258)(47)(147)(25)(2569)(58)(69)(369)(1)(3)(5)(7)(9)
 \\   
 & = 
(2)(4)(6)(8)(14)(36)(258)(47)(147)(25)(2569)(69)(58)(369)(1)(3)(5)(7)(9)
 \\   
 & = 
(2)(4)(6)(8)(14)(36)(258)(47)(147)(69)(25)(58)(369)(1)(3)(5)(7)(9)
 \\   
 & = 
(2)(4)(6)(8)(14)(36)(258)(47)(69)(147)(25)(58)(369)(1)(3)(5)(7)(9)
 \\   
 & = 
(2)(4)(6)(8)(14)(36)(258)(47)(69)(147)(58)(25)(369)(1)(3)(5)(7)(9)
 \\   
 & = 
(2)(4)(6)(8)(14)(36)(258)(47)(69)(58)(147)(25)(369)(1)(3)(5)(7)(9)
 \\   
 & = 
(2)(4)(6)(8)(14)(36)(258)(47)(69)(58)(147)(369)(25)(1)(3)(5)(7)(9)
\end{align*}
}%


\bibliographystyle{amsalpha}
\bibliography{pentagon}

\end{document}